\documentclass[abstracton]{scrartcl}
\usepackage{amsthm,amssymb,amsmath}
\usepackage{enumitem}
\usepackage{hyperref}
\usepackage{subcaption}
\usepackage{xspace}
\usepackage[utf8]{inputenc}

\usepackage{pgf,tikz,ifthen,calc}
\usetikzlibrary{math,positioning,calc,arrows.meta,decorations.pathmorphing,decorations.markings,shapes,hobby}

\newtheorem{theorem}{Theorem}[section]
\newtheorem{lemma}[theorem]{Lemma}

\newtheorem{observation}[theorem]{Observation}

\theoremstyle{definition}

\theoremstyle{remark}
\newtheorem{clm}{Claim}
\newtheorem*{clmnonum}{Claim}
\newtheorem{remark}[theorem]{Remark}

\newcommand{\MES}{\ensuremath{\mathbf M}}
\newcommand{\UPG}{\ensuremath{\mathbf G}}

\title{Universal planar graphs for the topological minor relation}
\author{Florian Lehner\thanks{Much of the research leading to the results presented in this paper was carried out while the author was supported by the Austrian Science Fund (FWF) Grant no.\ P31889-N35}}

\begin{document}

\maketitle
\begin{abstract}
    Huynh et al.\ recently showed that a countable graph $G$ which contains every countable planar graph as a subgraph must contain arbitrarily large finite complete graphs as topological minors, and an infinite complete graph as a minor.
    We strengthen this result by showing that the same conclusion holds, if $G$ contains every countable planar graph as a topological minor.
    In particular, there is no countable planar graph containing every countable planar graph as a topological minor, answering a question by Diestel and Kühn.
    
    Moreover, we construct a locally finite planar graph which contains every locally finite planar graph as a topological minor. This shows that in the above result it is not enough to require that $G$ contains every locally finite planar graph as a topological minor.
\end{abstract}
\section{Introduction}
Call a graph $U$ \emph{universal} for a graph class $\mathcal G$, if it contains every element of $\mathcal G$, and let us say that $\mathcal G$ \emph{contains a universal element} if there is a universal graph $U$ for $\mathcal G$ which is contained in $\mathcal G$. Depending on the precise definition of containment, this leads to different notions of universality and different universal graphs. 

For instance, a classic result by Pach \cite{zbMATH03735842} states that the class of planar does not contain a universal element with respect to the subgraph relation, thereby providing a negative answer to a question of Ulam. 
In contrast to this, Diestel and Kühn \cite{zbMATH01356639} show that there is a countable planar graph containing all countable planar graphs as minors. This immediately leads to the question for which notions of containment in between the subgraph relation and the minor relation the class of planar graphs contains a universal element. In particular, Diestel and Kühn ask \cite[Problem 6]{zbMATH01356639} whether the class of planar graphs contains a universal element with respect to the topological relation.  Our main result provides a negative answer to this question.

\begin{theorem}
\label{thm:main}
The class of countable planar graphs does not contain a universal graph with respect to the topological minor relation.
\end{theorem}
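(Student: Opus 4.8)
The plan is to obtain Theorem~\ref{thm:main} from the following strengthening of the result of Huynh et al.\ quoted above (this is where I expect most of the work to lie): \emph{if a countable graph $G$ contains every countable planar graph as a topological minor, then $G$ contains $K_n$ as a topological minor for every $n\in\mathbb{N}$} (and, with additional effort, $K_{\aleph_0}$ as a minor). Granting this, suppose for contradiction that some countable planar graph $U$ contains every countable planar graph as a topological minor. Then $U$ contains $K_5$ as a topological minor, i.e.\ $U$ has a subgraph that is a subdivision of $K_5$; this subgraph is planar, being a subgraph of the planar graph $U$, and hence so is $K_5$ --- which is absurd. This proves Theorem~\ref{thm:main}; note that only the case $n=5$ is used.

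It remains to indicate how I would prove the displayed statement, and I would argue its contrapositive. Fix $n$, suppose $G$ is a countable graph with no $K_n$ topological minor, and aim to construct a countable planar graph $H$ with $H\not\leq_{\mathrm{top}} G$. The feature to exploit is that graphs without a $K_n$ topological minor are sparse: by Mader's theorem a finite such graph has only $O(n^2)$ edges per vertex, so every finite subgraph of $G$ is $d$-degenerate for some $d=d(n)$. Thus, away from its (possibly infinitely many) vertices of infinite degree, $G$ is sparse and tree-like, and even those vertices can interact only weakly --- three of them cannot have infinitely many common neighbours, let alone support the clique-subdivision structure we wish to rule out. This also suggests why the construction of $H$ must make essential use of configurations of infinitely many vertices of infinite degree, consistent with the paper's separate construction of a locally finite planar graph that is universal, under the topological minor relation, among all locally finite planar graphs: the phenomenon lives in the non-locally-finite part of the picture.

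The construction of $H$ would be a stage-wise diagonalisation against a fixed enumeration $g_1,g_2,\dots$ of $V(G)$. One maintains a finite plane graph $H_k$, with $H_k\subseteq H_{k+1}$ and each new piece drawn inside a single face of $H_k$ so that $H:=\bigcup_k H_k$ is planar (each finite subgraph of $H$ lying in some $H_k$), together with the finite list of all ways a subdivision of $H_k$ can sit inside $G[\{g_1,\dots,g_k\}]$. At stage $k+1$ one attaches to $H_k$ a planar gadget chosen so that, whichever listed partial embedding one tries to extend, accommodating the gadget in $G$ would force a vertex configuration too rich for a $d(n)$-degenerate graph --- that is, would create a $K_n$ topological minor, contrary to hypothesis. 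Then no subdivision of $H$ embeds into $G$, since such an embedding would restrict at every stage to one of the blocked partial embeddings. The main obstacle, and where a genuinely new idea is needed beyond the Huynh et al.\ argument for the subgraph relation, is the design of these gadgets: one must quantitatively play the degeneracy bound $d(n)$ off against how rich a planar gadget can be, and --- the essential new difficulty --- a gadget must control \emph{subdivisions} rather than subgraphs, pinning down not only which vertices of $G$ are used but also how the connecting paths run, all while coping with vertices of $G$ of infinite degree, about which edge-counting says nothing. Once the displayed statement is proved, the assertion that $G$ contains $K_{\aleph_0}$ as a minor should follow by a compactness argument that again uses the universality of $G$, not merely the existence of the individual finite clique subdivisions.
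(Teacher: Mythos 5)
Your reduction of Theorem~\ref{thm:main} to the statement that a universal $G$ must contain a $K_5$ (indeed $K_n$ for all $n$) topological minor is exactly the paper's: Theorem~\ref{thm:main} is an immediate corollary of Theorem~\ref{thm:no-uni-ctbl}, and that derivation needs no further comment. The problem is that your proposed proof of the key statement is not a proof but a strategy with the decisive step left open, and the strategy as described has concrete defects. First, the diagonalisation does not close: at stage $k$ you block only the finitely many ways a subdivision of $H_k$ sits inside $G[\{g_1,\dots,g_k\}]$, but a subdivision of $H_k$ inside $G$ may place its branch vertices and, worse, its subdivision paths anywhere in $G$; there are in general infinitely many topological embeddings of a fixed finite graph into a countable graph, and a full embedding of $H$ need not restrict at stage $k$ to anything you have blocked. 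Repairing this requires precisely the hard content you defer to the unspecified ``gadgets''. Second, the sparsity heuristic does not deliver what you need: Mader's bound controls finite subgraphs and says nothing about infinite-degree vertices (as you note), and your specific claim that three vertices cannot have infinitely many common neighbours in a $K_n$-topological-minor-free graph is false --- $K_{3,\aleph_0}$ has no $K_5$ subdivision, since all but three of its vertices have degree $3$. Since no gadget is constructed and the blocking argument does not apply to embeddings that leave $G[\{g_1,\dots,g_k\}]$, the proposal does not establish Theorem~\ref{thm:no-uni-ctbl}(2), and hence does not establish Theorem~\ref{thm:main}.

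It is worth contrasting this with what the paper actually does, because the difference is instructive. Rather than diagonalising against one countable graph $G$ with a single cleverly built $H$, the paper exhibits an \emph{uncountable} family of countable planar graphs, namely the blow-ups $\overline W(\alpha)^{\parallel}$ for $\alpha\in\{0,1\}^{\mathbb N}$, all of which $G$ must contain. Since $G$ is countable, uncountably many of the corresponding $G$-embeddings agree on any prescribed finite set; Lemma~\ref{lem:containwplus} leverages this agreement to force either two distinct diagonals of some $4$-cycle $C_k$ to be realised simultaneously, or an ``extra'' path between non-adjacent vertices, and either way produces infinitely many pairwise disjoint, pairwise crossing bypasses on a triangular wedge. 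Lemma~\ref{lem:tangled} then converts these crossings into arbitrary permutation routings and hence into arbitrarily large complete topological minors and an infinite complete minor. The counting over an uncountable family is the engine that replaces your missing gadget; it is not clear that any single planar graph can play that role against an arbitrary countable $K_n$-subdivision-free $G$.
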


We point out that Theorem \ref{thm:main} has been proved independently by Krill in his master's thesis \cite{krillmaster}, see also the forthcoming paper \cite{krill}. While our proof is longer and more involved than the one presented in \cite{krillmaster}, it in fact also generalises a significantly stronger result.

In a recent preprint,  Huynh et al.~\cite{huynh2021universality} investigate how sparse a graph which contains all planar graphs as subgraphs can be. In other words, rather than relaxing the notion of containment, they ask how much the requirement of planarity of the universal graph needs to be relaxed in order to get a different answer to Ulam's question. They obtain two complementary results. On the one hand, they show that there are universal graphs for the class of planar graphs which share some key properties with planar graphs. On the other hand, they prove that a universal graph for the class of countable planar graphs with respect to the subgraph relation is in some sense very far from being planar: it contains arbitrarily large complete graphs as topological minors, and the infinite complete graph as a minor. In Section \ref{sec:nounictbl}, we prove the following strengthening of the latter result which immediately implies Theorem \ref{thm:main}.
\begin{theorem}
\label{thm:no-uni-ctbl}
Let $G$ be a countable graph containing every countable planar graph as a topological minor.
\begin{enumerate}
    \item $G$ contains an infinite complete minor.
    \item $G$ contains arbitrarily large finite complete topological minors.
\end{enumerate}
\end{theorem}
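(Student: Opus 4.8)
The plan is to show that a countable graph $G$ which contains every countable planar graph as a topological minor must contain a subdivision of $K_t$ for every $t$ (this gives (2)) and an infinite complete minor (this gives (1)), by feeding $G$ a carefully chosen supply of countable planar ``obstruction'' graphs and analysing how they can possibly be topologically embedded. The one structural fact that shapes the whole argument is that no single planar graph can force any of this on its own: topological minors of a planar graph are planar, and subdividing a graph does not change its topological minors, so a lone subdivision of a planar graph sitting inside $G$ tells us nothing about complete (topological) minors of $G$. Hence the argument must exploit the hypothesis more than once -- it will re-embed further, adaptively chosen, countable planar graphs into substructures produced along the way -- and must use that $V(G)$ is countable.

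The template is a ``robustly self-similar'' planar graph, presumably assembled from the designated graphs $\GAD$, $\MES$ and $\UPG$ of the paper. Its salient features are: a distinguished vertex $r$ of infinite degree; an infinite family of internally disjoint ``arms'' emanating from $r$; and the property that each arm again contains an isomorphic copy of the whole template attached along its far end, while -- and this is what must be arranged with care -- a plane drawing exists in which each successive arm, with all its recursive contents, is inserted inside a single face of the part drawn so far, so that the template is genuinely planar. I would first pin down this template, verify that it is countable and planar, and record the combinatorial consequence of the self-similarity: any topological embedding of it into any graph sends $r$ to a vertex joined by internally disjoint paths to infinitely many pairwise disjoint copies of the template again.

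The engine of the proof is a pigeonhole/accumulation argument run against an enumeration of $V(G)$. Fix a topological embedding of the template into $G$ and let $v$ be the image of $r$. Because $G$ contains all countable planar graphs topologically, before fixing the embedding I may decorate the $i$-th arm with a subdivision of an arbitrarily large finite planar graph $F_i$ of my choosing (formally, apply the hypothesis to the still countable and planar graph obtained by gluing $F_i$ onto the $i$-th arm). Since the arms are internally disjoint and each is ``large'' while $V(G)$ is countable, a König-type / diagonal argument produces a vertex of $G$ reached by internally disjoint paths from infinitely many arms; iterating inside those arms yields a descending sequence of vertices $v_1, v_2, \dots$ of $G$ together with pairwise disjoint connected subgraphs $B_1, B_2, \dots$ so that each $v_i$ is joined to each $B_j$ by a path internally disjoint from everything chosen before. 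Contracting the relevant pieces exhibits pairwise adjacent branch sets, i.e.\ a $K_{\aleph_0}$ minor, proving (1). For (2) one runs the same scheme with the recursion truncated at depth $t$ and a finite, suitably linked planar template in place of the infinite one, so that the finitely many accumulation vertices, pairwise joined through the nested arms, become the branch vertices of a $K_t$ subdivision; one must also exhibit such a finite planar template with the required robust linkage property.

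The main obstacle is precisely this accumulation-to-clique step -- extracting honest complete-minor structure out of planar templates. This forces two technical burdens. First, the decorations $F_i$ must be chosen \emph{adaptively}, after partially revealing the embedding, so the proof has to be organised as a single construction that interleaves applications of the universality hypothesis with bookkeeping over $V(G)$. Second, the paths witnessing the links between the accumulation vertices and the branch sets must be kept internally disjoint; this is where the internal disjointness of the arms and the plane structure of the template are used, to guarantee that the many paths leaving an accumulation vertex cannot be forced to collide. Finally, (1) does not follow formally from (2) -- a graph can have $K_t$ topological minors for every $t$ yet no $K_{\aleph_0}$ minor, and even a $K_{\aleph_0}$ minor need not yield a $K_5$ topological minor -- so the infinite case needs its own run of the accumulation argument with the non-terminating recursion, plus a compactness step to glue the nested data into a single infinite complete minor.
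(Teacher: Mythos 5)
There is a genuine gap at the heart of your plan: the ``accumulation'' step. You fix a single topological embedding of one (possibly decorated) planar template into $G$ and then claim that, because $V(G)$ is countable and the arms are large, ``a K\"onig-type / diagonal argument produces a vertex of $G$ reached by internally disjoint paths from infinitely many arms,'' and that iterating this yields pairwise linked branch sets. This does not follow. The image of a single $G$-embedding of a planar graph is a subdivision of that planar graph, hence planar, and therefore cannot contain the clique structure you are trying to extract from it; countability of $V(G)$ does not force pairwise disjoint arms to re-intersect or to acquire extra connecting paths --- disjoint subsets of a countable set remain disjoint. Your proposed fix, ``re-embed further, adaptively chosen, countable planar graphs into substructures produced along the way,'' also has no mechanism behind it: the hypothesis only guarantees that each countable planar graph has \emph{some} $G$-embedding, and gives you no control over where that embedding lands, so you cannot anchor a later embedding to the accumulation vertices found earlier. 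As written, every piece of structure you ever exhibit lies inside the image of one planar graph, which is exactly the situation you correctly identified at the outset as hopeless.

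The missing idea --- and the engine of the paper's proof --- is to play an \emph{uncountable} family of pairwise distinct planar graphs against the countability of $G$. The paper builds a grid-like ``triple wedge'' $\overline W$ and, for each $\alpha\in\{0,1\}^{\mathbb N}$, a planar variant $\overline W(\alpha)$ that differs only in which diagonal is inserted into each of a sequence of facial $4$-cycles; it also replaces each edge by infinitely many parallel paths so that agreement of two embeddings on vertices can be upgraded to robust, reroutable path systems. Since $G$ is countable, any finite set of vertices has only countably many possible images, so uncountably many of the embeddings $\iota_\alpha$ agree on each finite piece; a recursive refinement (Lemma~\ref{lem:containwplus}) then shows that the \emph{differences} between the graphs $\overline W(\alpha)$ --- edges present in one and absent in another --- materialise as extra paths in $G$ joining non-adjacent vertices of a common grid image. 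These extra paths are the crossing bypasses that Lemma~\ref{lem:tangled} converts into arbitrary linkages, and hence into an infinite complete minor and arbitrarily large complete topological minors. Your instincts that one planar graph cannot suffice and that part (1) is not a formal consequence of part (2) are both correct, but without the uncountable-family pigeonhole your construction never leaves the planar world.
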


In Section \ref{sec:unilocfin}, we turn our attention to locally finite graphs, that is, graphs in which every vertex has only finitely many neighbours.
The main result of this section shows that the conclusion of Theorem \ref{thm:no-uni-ctbl} no longer holds if we only require $G$ to contain all locally finite planar graphs as topological minors.

\begin{theorem}
\label{thm:uni-locfin}
The class of locally finite planar graphs contains a universal element with respect to the topological minor relation.
\end{theorem}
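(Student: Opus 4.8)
The plan is to build the universal graph by hand, reducing first to a more manageable case. It is enough to produce a locally finite planar graph $U$ containing every \emph{connected} locally finite planar graph as a topological minor: then $\bigsqcup_{n\in\mathbb N}U$ is again locally finite and planar (embed the $n$-th copy in the disc of radius $1$ about $(4n,0)$), and since a locally finite graph has only countably many components, each of them a connected locally finite planar graph, a subdivision of an arbitrary such graph can be assembled from subdivisions of its components placed in distinct copies of $U$. So I would henceforth consider only connected $H$.

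Second, I would fix a normal form for connected $H$. Fix a root $r$ and a plane embedding, and take the onion filtration $H_0\subseteq H_1\subseteq\cdots$ with $H_n=B_n(r)$. Each $H_n$ is finite; an edge of $H$ runs only within a sphere $S_i=V(H_i)\setminus V(H_{i-1})$ or between consecutive spheres; and once $H_{n+1}$ has been exposed, every vertex of $S_n$ has all of its $H$-neighbours present, so its final degree and the cyclic order of its incident edges are determined. Thus $H$ arises as the limit of a chain of \emph{finite} one-step extensions of a very restricted shape: ``add the next sphere, together with its internal edges and its edges to the current outer sphere, honouring the prescribed cyclic orders''. This is essentially the only input about planarity the construction needs.

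Third, and at the heart of the matter, I would construct a single locally finite planar graph $U$ with a matching \emph{extension property} and embed $H$ into it by following the chain above. Throughout I would maintain a finite ``partial picture'': a subdivision of $H_n$ in $U$ in which the branch vertices of $\bigcup_{i<n}S_i$ are \emph{finalised} (their $U$-degree already equals their $H$-degree), while those of the current outer sphere $S_n$ carry \emph{reserved capacity} (unused incident edges of $U$) and pairwise disjoint \emph{reserved corridors} (arcs of $U$, disjoint from everything used so far, into which the next sphere is to grow). The property demanded of $U$ is that every such partial picture admits, inside its reserved corridors, a realisation of any admissible one-step extension, again leaving fresh reserved capacity and corridors for the children of $S_{n+1}$; the limit of the chain is then a subdivision of $H$. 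I would obtain such a $U$ by a Fra\"iss\'e-type construction: let $\mathcal W$ be the class of finite plane ``widgets'' (finite plane graphs together with a boundary interface consisting of labelled ports on the boundary circle, reserved-capacity counts, and reserved corridors), with morphisms the admissible extensions, check that $\mathcal W$ has the joint embedding and amalgamation properties, and take $U$ to be the resulting countable generic object. Alternatively one can write $U$ down explicitly as a recursively defined tree of planar gadgets — each a finite planar piece furnishing a vertex of the next available large degree, whose complementary regions again carry such trees — and verify the extension property directly.

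The step I expect to be the real obstacle is establishing amalgamation for $\mathcal W$ (equivalently, the extension property for the explicit $U$) \emph{while simultaneously} keeping $U$ locally finite and planar. Two extensions touching a common interface vertex must not drive its degree to infinity, which forces interfaces to carry only finitely much reserved capacity and extensions to consume it sparingly; interfaces must be arcs on the boundary circle of a disc, and corridors must be nested in a tree-like fashion, so that gluing plane widgets and routing subdivided edges never creates a crossing. Once the widget class is arranged so that these three bookkeeping layers — the degree budget, the cyclic orders at branch vertices, and the planar layout of corridors — are mutually consistent, the remaining ingredients (the disjoint-union reduction, the onion filtration, and the limiting argument) are routine.
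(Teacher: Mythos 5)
Your overall strategy---build one locally finite planar graph $U$ with a one-step extension property, embed a connected $H$ by following an exhaustion, and handle disconnected graphs by disjoint unions---has the same shape as the paper's argument, but it contains one outright error and one genuine gap. The error is in the reduction to connected graphs: a locally finite planar graph can have \emph{continuum} many components (for instance, continuum many disjoint double rays is locally finite and, by Wagner's criterion, planar), so the claim that ``a locally finite graph has only countably many components'' is false, and a countable disjoint union of copies of $U$ is not universal. The fix is exactly what Wagner's theorem provides: only countably many components can contain a vertex of degree at least $3$, and all remaining components are paths or cycles, so one must additionally adjoin continuum many cycles of each length and continuum many double rays; this is how the paper concludes.

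The gap is that essentially all of the content of the theorem lies in the construction you defer. You correctly isolate the crux---a locally finite plane graph in which every finite partial picture can absorb an arbitrary admissible planar one-step extension while respecting prescribed cyclic orders and a finite degree budget---but you then label this ``the real obstacle'' and offer only two unverified routes to it (amalgamation for a class of plane widgets, or an explicit gadget tree). Amalgamation for such a class is not routine: naively amalgamating two extensions over a common interface tends either to blow up degrees or to force crossings, which is precisely why the theorem is not obvious, and a generic Fra\"iss\'e limit has no reason to be locally finite without a quantitative degree discipline built into the widgets. The paper's resolution is concrete: gadgets $\MES(n)$ made of two well-linked meshes joined by spokes around a centre vertex, glued along facial attachment cycles of growing length. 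Well-linkedness (every order-reversing injection between the two distinguished facial cycles is realised by a linkage meeting them only in its endpoints) is exactly the device that reroutes all pending half-edges of a face, in the correct cyclic order, into a fresh attachment cycle, and the centre with its spokes realises the next vertex together with its full cyclic fan of neighbours. Until you produce such a gadget and verify these routing properties, your argument is a plan rather than a proof.
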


To fully appreciate this result, it is worth mentioning that the aforementioned universality results concerning the subgraph relation and the minor relation are unaffected by restricting to locally finite graphs. Pach's proof from \cite{zbMATH03735842} in fact shows that there is no planar graph containing every locally finite planar graph as a subgraph, and the conclusion of the result by Huynh et al.\ from \cite{huynh2021universality} mentioned above still holds for graphs containing all locally finite planar graphs as subgraphs. Moreover, the universal planar graph for the minor relation constructed in \cite{zbMATH01356639} is in fact locally finite. In light of this, it is perhaps surprising that local finiteness makes a big difference when considering the topological minor relation.

\section{Preliminaries}

The purpose of this section is to recall basic definitions and set up some notation.
For graph theoretic notions not explicitly defined, we follow \cite{MR3644391}.  

A graph $G$ consists of a set $V(G)$ of vertices and a set $E(G)$ of edges. Given two graphs $G$ and $H$, we denote by $G \cup H$ the graph with vertex set $V(G) \cup V(H)$ and edge set $E(G) \cup E(H)$. Similarly define $G \cap H$. We stress that the graphs in a union do not have to be disjoint, in particular we will often consider unions of graphs which have vertices in common.

It will be convenient to consider cycles in graphs as cyclic sequences of vertices. Given a sequence $X = (x_1,\dots, x_n)$ , a sequence of the form $(x_k, \dots, x_n,x_1,\dots x_{k-1})$ is called a \emph{cyclic shift} of $X$. Call two sequences \emph{cyclically equivalent} if they are cyclic shifts of one another. Call an equivalence class with respect to this relation a \emph{cyclic sequence} and denote the cyclic sequence containing $(x_1,\dots, x_n)$ by $[x_1,\dots, x_n]$. A cycle in a graph $G$ can now be seen as a cyclic sequence $C = [v_1,\dots,v_n]$ of vertices such that $v_iv_{i+1} \in E(G)$ for every $i < n$ and $v_1v_n \in E(G)$. Note that this assigns a direction to the cycle.

Let us call a sequence $Y=(y_1,\dots,y_k)$ \emph{cyclically ordered} with respect to a cyclic sequence $X=[x_1,\dots, x_n]$, if some representative contains $X$ contains $Y$ as a subsequence. If $X$ is clear from the context, we simply call $Y$ cyclically ordered. Clearly, if $Y$ is cyclically ordered, then so is any sequence which is cyclically equivalent to $Y$; we may thus extend this notion to the case where $Y$ is a cyclic sequence.

All (cyclic) sequences considered from now on will be sequences of vertices of graphs. For such a (cyclic) sequence $X$, let us denote the set of all vertices appearing in $X$ by $V(X)$. Let $X_1$ and $X_2$ be cyclic sequences of vertices, let $Y \subseteq V(X_1)$, and let $(y_1, \dots, y_n)$ be a cyclically ordered enumeration of $Y$. We say that a function $\phi\colon Y \to V(X_2)$ \emph{preserves} the cyclic order if $(\phi(y_1), \dots, \phi(y_k))$ is cyclically ordered with respect to $X_2$. We say that such a function \emph{reverses} the cyclic order if $(\phi(y_k), \dots, \phi(y_1))$ is cyclically ordered with respect to $X_2$.


A plane embedding of a graph $G$ assigns to each vertex $v\in V(G)$ a point $\iota(v) \in \mathbb R^2$ and to each edge $e = uv \in E(G)$ a polygonal arc $\iota(e)$ in $\mathbb R^2$ connecting $\iota(u)$ to $\iota(v)$ such that for any two distinct edges $e$ and $f$ the arcs $\iota(e)$ and $\iota (f)$ are internally disjoint. By a slight abuse of notation we write $\iota(G)$ for $\bigcup_{v \in V(G)} \{\iota(v)\} \cup \bigcup_{e \in E(G)} \iota(e)$. Let us call a graph \emph{planar} if it has a plane embedding. Note that we do not forbid that $\iota(G)$ has accumulation points.

The following theorem by Dirac and Schuster \cite{zbMATH03087498} gives a necessary and sufficient condition for the planarity of countable graphs.

\begin{theorem}
\label{thm:countableplanar}
A countable graph is planar if and only if all of its finite subgraphs are planar.
\end{theorem}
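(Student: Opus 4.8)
The ``only if'' direction is immediate, since restricting a plane embedding of $G$ to a subgraph yields a plane embedding of that subgraph; the content is the converse. Because a subdivision of $K_5$ or of $K_{3,3}$ is a finite graph, the hypothesis is equivalent (via Kuratowski's theorem for finite graphs) to the statement that $G$ contains no such subdivision, so the theorem is really a compactness assertion, and I would prove it by a König's-lemma argument over combinatorial embeddings. First I would reduce to the case that $G$ is connected: a graph is planar iff each of its components is, the components inherit the hypothesis, and countably many plane embeddings can be combined into one by placing them in pairwise disjoint open disks (our definition of plane embedding permits accumulation points of the image, which is exactly what makes this harmless). Assuming $G$ connected, fix an enumeration $v_1,v_2,\dots$ of $V(G)$ for which every initial segment induces a connected subgraph $G_n:=G[\{v_1,\dots,v_n\}]$; such an enumeration exists because $G$ is connected, by repeatedly appending a vertex adjacent to one already chosen. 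Note $\bigcup_n G_n=G$ and each $G_n$ is a finite subgraph of $G$, hence planar by hypothesis.

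Next comes the König step. For a finite graph, call a choice of cyclic ordering of the edges around each vertex a \emph{rotation system}, and call such a system \emph{planar} if it is induced by some plane embedding; deleting a vertex from a plane embedding shows that the restriction to $G_{n-1}$ of a planar rotation system of $G_n$ is again planar. Build a tree $T$ whose level-$n$ vertices are the planar rotation systems of $G_n$, with parent of a rotation system its restriction to $G_{n-1}$. Since each $G_n$ is planar it admits a planar rotation system, so $T$ has a vertex on every level; since each $G_n$ has only finitely many rotation systems, every level of $T$ is finite. Thus $T$ is an infinite, finitely branching tree, and König's lemma produces an infinite branch, i.e.\ a sequence of planar rotation systems $\rho_n$ of $G_n$ with $\rho_{n+1}$ restricting to $\rho_n$ for all $n$.

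Finally I would realize the limit. Inductively construct plane embeddings $\iota_n$ of $G_n$ inducing $\rho_n$ with $\iota_{n+1}$ extending $\iota_n$: given $\iota_n$, take any plane embedding inducing $\rho_{n+1}$; its restriction to $G_n$ induces $\rho_n$, and since $G_n$ is connected this restriction is equivalent to $\iota_n$ under an ambient homeomorphism of the sphere (a connected plane graph is determined up to homeomorphism by its rotation system), so composing with that homeomorphism yields the required $\iota_{n+1}$. Then $\iota:=\bigcup_n\iota_n$ assigns a point to each vertex and an arc to each edge, and any two edges lie in a common $G_n$ and are internally disjoint there, so $\iota$ is a plane embedding of $G$; to land in $\mathbb R^2$ one deletes from the sphere a point outside $\iota(G)$, which exists because $\iota(G)$ is a countable union of nowhere dense sets and hence meager. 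I expect this last step to be the main obstacle: it rests on the classification of embeddings of connected graphs by rotation systems, it requires some care at vertices of infinite degree (whose limiting ``rotation'' is a cyclic order on an infinite set, still realized concretely by the construction), and it needs a little hygiene to keep the arcs polygonal as required by our definition — for instance by routing each new vertex $v_{n+1}$ together with its incident edges explicitly into the appropriate face of $\iota_n$, in the cyclic order prescribed by $\rho_{n+1}$, rather than invoking abstract ambient homeomorphisms.
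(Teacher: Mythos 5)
The paper does not prove this statement: it is the Dirac--Schuster theorem, quoted with a citation, so there is no in-paper argument to compare against. Judged on its own, your compactness proof is essentially the standard one and is correct in outline. The reduction to the connected case, the connected exhaustion $G_1\subseteq G_2\subseteq\cdots$, the K\H{o}nig's-lemma argument over planar rotation systems (each level is nonempty because $G_n$ is planar, finite because $G_n$ has finitely many rotation systems, and closed under the parent map because restricting a plane embedding restricts its rotation system), and the final union via internal disjointness inside a common $G_n$ are all sound, and you correctly identify the two points needing care: the polygonality requirement and the behaviour at vertices of infinite degree (the latter is genuinely harmless here because the paper's definition of plane embedding permits accumulation points).

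The one step that carries real mathematical weight is the realization of the limit branch, and you should be explicit that it rests on an imported classical theorem: for a finite connected graph, the rotation system determines the cellular embedding in the sphere up to orientation-preserving ambient homeomorphism (Heffter--Edmonds), and in particular determines the facial walks combinatorially. As you note, invoking the ambient homeomorphism directly conflicts with the polygonal-arc convention, so the clean version is your second one: use the combinatorial face structure of $\rho_{n+1}$ to identify the face of $\iota_n$ whose boundary walk contains the neighbours of $v_{n+1}$ in the prescribed corners, and route $v_{n+1}$ and its edges into that face by explicit polygonal arcs. Spelling out why those neighbours do lie on a single facial walk of $\iota_n$, in the right cyclic positions (including at repeated occurrences of cut vertices on the walk), is the only remaining bookkeeping; with that done the argument is complete.
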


A similar condition for arbitrary graphs has been given by Wagner \cite{zbMATH03246989}.

\begin{theorem}
\label{thm:uncountableplanar}
An arbitrary graph is planar if and only if it has at most continuum many vertices and at most countably many vertices of degree greater than $2$, and all of its finite subgraphs are planar.
\end{theorem}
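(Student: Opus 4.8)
This is Wagner's planarity criterion for arbitrary graphs, and I would prove the two implications separately. For necessity, fix a plane embedding $\iota$ of $G$. That every finite subgraph of $G$ is planar follows by restricting $\iota$, and $|V(G)|\le|\mathbb R^2|=\mathfrak c$ because $\iota$ is injective on vertices, so the only real content is that $G$ has at most countably many vertices of degree greater than $2$. For each such vertex $v$ choose three distinct edges $e_1,e_2,e_3$ at $v$, and let $T_v$ be the union of the initial sub-arcs of $\iota(e_1),\iota(e_2),\iota(e_3)$ starting at $\iota(v)$, of some small length to be specified. Since distinct edges have internally disjoint arcs and no vertex lies in the interior of an edge-arc, for short enough sub-arcs $T_v$ is a \emph{triod}, i.e.\ a homeomorphic copy of $K_{1,3}$, with branch point $\iota(v)$. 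If in addition we reserve for every edge two disjoint initial segments of its arc, one for each endpoint, and always take the relevant sub-arc inside the reserved segment, then the $T_v$ are pairwise disjoint: for $v\neq w$ the only vertex-point of $T_v$ is $\iota(v)$ and the only vertex-point of $T_w$ is $\iota(w)$, while off the vertex-points $T_v$ and $T_w$ lie in interiors of edge-arcs, which are disjoint unless $v$ and $w$ share an edge, in which case the two triods meet that edge's arc in its two disjoint reserved segments. An uncountable set of vertices of degree greater than $2$ would thus yield an uncountable family of pairwise disjoint triods in the plane, contradicting R.~L. Moore's classical theorem that no such family exists.

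For sufficiency, assume the three conditions. The components of $G$ that contain no vertex of degree greater than $2$ are paths, rays, double rays and cycles, each of which embeds in a line or a circle; there are at most $\mathfrak c$ of them, and $\mathfrak c$ pairwise disjoint lines and circles fit into a single disk, so all of these components can be dealt with at once. Every remaining component contains a vertex of degree greater than $2$, and as there are only countably many such vertices there are only countably many such components; putting each into its own disk $D_1,D_2,\dots$, it suffices to embed one such component $K$. In $K$, topologically suppress all degree-$2$ vertices: the branch vertices of $K$ then carry a multigraph in which each maximal path of degree-$2$ vertices between two branch vertices has become an edge (loops and parallel edges allowed), each pendant path has become a pendant edge, and each pendant ray, after first being replaced by a pendant edge to a fresh leaf, likewise. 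Deleting the pendant edges and then identifying parallel edges yields a \emph{countable simple} graph $H$ on the branch vertices, each of whose finite subgraphs is a topological minor of a finite subgraph of $G$ and hence planar; by the Dirac--Schuster theorem (Theorem~\ref{thm:countableplanar}), $H$ is planar. I would then fix a plane embedding of $H$ and rebuild $K$ from it: subdivide each arc to reinstate the suppressed degree-$2$ vertices; thicken each arc to a thin strip inside which the (at most continuum many) parallel subdivision-paths collapsed in $H$ are drawn as nested arcs; and route the (again at most continuum many) pendant paths, pendant rays and loops at each branch vertex into the complementary regions.

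The reconstruction is where the difficulty lies, and I expect it to be the main obstacle. Embeddings of infinite graphs may have accumulation points --- the definition of plane embedding above explicitly allows this --- so in general there is no room immediately alongside a given edge-arc of $H$ for the parallel subdivision-paths, and no well-behaved face at a given branch vertex into which a pendant path, ray or loop can be sent; nor can one simply appeal to an accumulation-point-free drawing of $H$, since such a drawing need not exist (already a branch vertex of infinite degree in $H$ cannot have a neighbourhood meeting only finitely many edges). The heart of the matter is therefore to carry out the rebuilding \emph{despite} possible accumulation points, either by arranging that the chosen drawing of $H$, or a local surgical modification of it, supplies precisely the local room the reconstruction uses, or by performing the reconstruction arc by arc and vertex by vertex with explicit strips and fans. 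Once this is in hand, distributing the at-most-continuum-many parallel paths and pendant structures among the available strips and faces is only bookkeeping; the necessity direction, by contrast, reduces cleanly to Moore's triod theorem.
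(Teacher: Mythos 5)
This theorem is quoted in the paper as a known result of Wagner and is not proved there, so there is no in-paper argument to compare yours against; I can only assess your attempt on its own terms. Your necessity direction is complete and correct: restricting the embedding handles finite subgraphs, injectivity of $\iota$ on vertices gives the cardinality bound, and the construction of pairwise disjoint triods (one per vertex of degree at least $3$, built from reserved initial segments of three incident edge-arcs) together with Moore's theorem on disjoint triods in the plane is exactly the standard way to bound the number of branch vertices. The reduction you set up for sufficiency is also sound as far as it goes: degree-at-most-$2$ components are disposed of separately, only countably many components contain a branch vertex, and suppressing degree-$2$ vertices, deleting pendant structures and identifying parallel classes yields a countable simple graph $H$ whose finite subgraphs are topological minors of finite subgraphs of $G$, so $H$ is planar by Theorem~\ref{thm:countableplanar}.

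The genuine gap is the one you yourself flag and then leave open: the reconstruction of $K$ from a plane embedding of $H$. You need, alongside each edge-arc of $H$, a region homeomorphic to a disk in which to nest up to continuum many parallel subdivision paths, and, at each branch vertex, free sectors incident to that vertex into which to route loops, pendant paths and pendant rays. An arbitrary plane embedding of a countable graph need not supply either: the rest of $\iota(H)$, or its accumulation points, may be dense in every neighbourhood of a given arc or vertex, and (as you note) one cannot simply pass to an accumulation-point-free drawing, since none need exist once $H$ has a vertex of infinite degree. Saying that one would ``either arrange that the chosen drawing supplies the local room or perform the reconstruction arc by arc with explicit strips and fans'' names two strategies without executing either, and this is precisely where the substance of Wagner's theorem lies --- the necessity direction and the reduction to Dirac--Schuster are the easy parts. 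To close the gap you would need a concrete lemma, proved, of the form: every countable planar graph admits a plane embedding in which every edge is incident to a face containing a disk bordering the whole edge-arc, and every vertex is incident, between any two consecutive edges in its rotation, to a face whose closure contains that vertex --- or some substitute strong enough to carry out your thickening-and-fanning step. As it stands, the sufficiency direction is a plan, not a proof.
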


Similarly to embeddings of graphs in the plane, we can also define embeddings of graphs in other graphs. For two graphs $G$ and $H$, an \emph{$G$-embedding} $\iota$ of $H$ assigns to every $v \in V(H)$ a vertex $\iota (v) \in V(G)$ and to every edge $e = uv \in E(H)$ a $\iota(u)$--$\iota(v)$-path $\iota(e)$ in $G$ such that for distinct edges $e$ and $f$ the paths $\iota(e)$ and $\iota(f)$  are internally disjoint. We call $H$ a \emph{topological minor} of $G$ if there is a $G$-embedding of $H$.

Let us say that two $G$-embeddings $\iota, \iota'$ of a graph $H$ \emph{agree} on $S \subseteq V(H) \cup E(H)$, if for any $s \in S$ we have $\iota(s) = \iota'(s)$. By slight abuse of notation extend this notion to $G$-embeddings of different graphs as follows. Let $S \subseteq V(H) \cup E(H)$ and let $S'\subseteq V(H') \cup E(H')$. Let $f \colon S \to S'$ be a bijection. Let $\iota$ be a $G$-embedding of $H$, and let $\iota'$ be a $G$-embedding of $H'$. We say that $\iota$ and $\iota'$ agree via $f$ on $S$, if $\iota(s) =\iota'(f(s))$ for all $s \in S$. If $H$ and $H'$ have a subgraph in common and all elements in $S$ are contained in this subgraph, then we omit $f$ and tacitly assume that $f$ is the identity. This in particular includes the case where $H$ and $H'$ are obtained from the same graph by adding some vertices and edges.

We say that a family $\mathcal I$ of $G$-embeddings (possibly of different graphs) agrees on $S$ if any pair $\iota, \iota'$ agrees on $S$. We will only need this notion for sets contained in common subgraphs of all involved graphs, hence we do not need to say anything about the functions $f$ involved. We denote the common image of $s \in S$ under all $\iota \in \mathcal I$ by $\mathcal I(s)$.

It is easy to see that Theorems \ref{thm:countableplanar} and \ref{thm:uncountableplanar} do not extend to this notion of embedding. For instance, if $G$ is the disjoint union of all finite graphs, then a countably infinite graph does not have a $G$-embedding, but all of its finite subgraphs do. 

Let $(G_n)_{n \in \mathbb N}$ be an increasing sequence of graphs, that is, $G_n$ is a subgraph of $G_{n+1}$ for every $n \in \mathbb N$. We define $\lim_{n\to \infty} G_n$ as the graph with vertex set $\bigcup_{n\in \mathbb N} V(G_n)$ and edge set $\bigcup_{n\in \mathbb N} E(G_n)$.

\begin{lemma}
\label{lem:limitembedding}
Let $(H_n)_{n \in \mathbb N}$ be an increasing sequence of graphs and let $H = \lim_{n \to \infty} H_n$. If there are $G$-embeddings $\iota_n$ of $H_n$ such that $\iota_{n+1}$ and $\iota_{n}$ agree on $H_n$ for every $n$, then there is a $G$-embedding $\iota$ of $H$ which agrees with $\iota_n$ on $H_n$ for every $n$.
\end{lemma}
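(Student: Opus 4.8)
The plan is to define $\iota$ pointwise as the "eventual value" of the sequence $(\iota_n)$ and then check that this is a well-defined $G$-embedding of $H$. First I would fix the bookkeeping: for each vertex $v \in V(H)$, since $V(H) = \bigcup_n V(H_n)$, there is a least $n(v)$ with $v \in V(H_{n(v)})$; similarly for each edge $e \in E(H)$ there is a least $n(e)$ with $e \in E(H_{n(e)})$. The hypothesis that $\iota_{n+1}$ agrees with $\iota_n$ on $H_n$ means, by a trivial induction, that $\iota_m$ agrees with $\iota_n$ on $H_n$ whenever $m \geq n$. Hence for $v \in V(H)$ the value $\iota_n(v)$ is the same for all $n \geq n(v)$, and we may define $\iota(v) := \iota_{n(v)}(v) = \iota_n(v)$ for any $n \geq n(v)$; likewise $\iota(e) := \iota_{n(e)}(e)$, which equals $\iota_n(e)$ for every $n \geq n(e)$. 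By construction $\iota$ agrees with $\iota_n$ on $H_n$ for every $n$, since every $s \in V(H_n) \cup E(H_n)$ has $n(s) \leq n$.

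Next I would verify that $\iota$ is actually a $G$-embedding of $H$. For a single edge $e = uv \in E(H)$, pick $n \geq \max\{n(e), n(u), n(v)\}$; then $\iota_n$ is a $G$-embedding of $H_n \ni e,u,v$, so $\iota_n(e) = \iota(e)$ is an $\iota_n(u)$--$\iota_n(v)$-path $= \iota(u)$--$\iota(v)$-path in $G$, as required. For internal disjointness, take two distinct edges $e, f \in E(H)$ and choose $n \geq \max\{n(e), n(f)\}$; then $e, f \in E(H_n)$, and since $\iota_n$ is a $G$-embedding, $\iota_n(e) = \iota(e)$ and $\iota_n(f) = \iota(f)$ are internally disjoint. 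This establishes all the defining properties, so $\iota$ is a $G$-embedding of $H$, completing the proof.

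There is essentially no serious obstacle here: the statement is a routine compactness/direct-limit argument, and the only thing to be careful about is that every relevant finiteness condition — "an edge and its two endpoints live in a common $H_n$", "two edges live in a common $H_n$" — involves only finitely many objects, so one can always pass to a single large enough index where all the desired properties are already guaranteed by the fact that $\iota_n$ is a genuine $G$-embedding. Uniqueness of $\iota$ among embeddings agreeing with every $\iota_n$ on $H_n$ is immediate (every vertex and edge of $H$ lies in some $H_n$), though the statement only asks for existence, so I would not belabour it.
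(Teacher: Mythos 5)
Your proposal is correct and takes essentially the same approach as the paper: define $\iota$ on each vertex and edge as the common (eventual) value of the $\iota_n$, then check well-definedness and the embedding properties by passing to a single sufficiently large index. The paper states this in two sentences; your version just spells out the routine verifications.
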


\begin{proof}
For $x \in V(H) \cup E(H)$ pick $n$ large enough that $x \in V(H_n) \cup E(H_n)$ and set $\iota(x) = \iota_n(x)$. The conditions of the lemma ensure that this is unambiguous and defines a $G$-embedding of $H$.
\end{proof}

Theorems \ref{thm:countableplanar} and \ref{thm:uncountableplanar}, and Lemma \ref{lem:limitembedding} tell us that we can obtain (plane or $G$-) embeddings of infinite graphs by constructing embeddings of an increasing sequence of finite subgraphs. In the remainder of this section we recall some well known facts and make some easy observations about finite planar graphs.

If $\iota$ is a plane embedding of a finite connected graph $G$, then $\mathbb R^2 \setminus \iota(G)$ consists of finitely many open disks and one unbounded region which we call the \emph{faces} of the embedding; the unbounded region is called the \emph{outer} face, all other regions are called \emph{interior} faces. If $\iota$ is a plane embedding of an infinite graph $G$, we still call a connected component of $\mathbb R^2 \setminus \iota(G)$ a \emph{face} of the embedding. We note that the complement of an embedding of an infinite planar graph can be much more involved due to accumulation points of the embedding; in particular, faces of embeddings of infinite graphs are not necessarily homeomorphic to disks. 

In the following assume that $F$ is a face which is homeomorphic to an open disk. Call a vertex or edge $x$ \emph{incident} to $F$ if $\iota(x)$ lies in the closure of $F$. By tracing the boundary of $F$ in clockwise direction if $F$ is an interior face, or in anti-clockwise direction if $F$ is the outer face, we obtain a cyclic sequence of vertices incident to this face which we call a \emph{facial sequence}. The reason we treat the outer face differently is that we want to make sure that facial sequences are invariant under making a different face the outer face by applying an appropriate inversion. A facial sequence may contain the same vertex more than once (this happens only for cut vertices). If a facial sequence contains each vertex at most once, then this sequence defines a cycle in the graph which we call a \emph{facial cycle}.

\begin{remark}
\label{rmk:glueplanargraphs}
We can combine two connected planar graphs into a larger one by identifying facial cycles. More precisely, let $G$ and $H$ be two graphs, and let $C$ and $C'$ be facial cycles of the same length in $G$ and $H$, and let $\phi \colon V(C) \to V(C')$ be an order reversing bijection. It is not hard to see that the graph obtained by identifying each vertex $v$ of $C$ with $\phi(v)$ is again planar, and that this graph has a plane embedding in which all facial sequences of $G$ and $H$ except $C$ and $C'$ are again facial sequences. Note that if we choose $\phi$ to be order preserving rather than order reversing, then we also obtain a planar graph, but the facial sequences of one of the two graphs are reversed in the combined graph.
\end{remark}

\section{No universal countable planar graph}
\label{sec:nounictbl}

In the proof of Theorem \ref{thm:no-uni-ctbl}, certain grid-like graphs will play an important role, so we start by defining these graphs and collecting some observations about them. We note that many of these observations can be seen as special cases of more general results, see \cite{huynh2021universality}. We still provide proofs where appropriate in order to make this paper as self-contained as possible.

The \emph{triangular wedge} $W$ is the graph with vertex set $\mathbb N_0 \times \mathbb N_0$ and edges between vertices whose coordinates differ by $(1,0)$, $(0,1)$ or $(1,-1)$, see Figure \ref{fig:wedge}. The \emph{$k$-th layer} $W_k$ of $W$ is the subgraph induced by the vertices whose coordinates $(i,j)$ satisfy $i+j = k$. For $0 \leq i \leq k$, let $w_k^i$ be the vertex with coordinates $(i,k-i)$, in other words $w_k^i$ is the $i$-th vertex of $W_k$ in left-to-right order. For $m < n$, define the \emph{$m$--$n$-strip} $W_{m,n}$ as the subgraph of $W$ induced by the vertices in $\bigcup_{m\leq k \leq n} W_k$. Assume that $W$ is a subgraph of some larger graph. A \emph{bypass} for $W_{m,n}$ is a $w_a^0$--$w_b^b$-path with $m<a,b<n$ which meets $W$ only at its endpoints. A bypass $P$ from $w_a^0$  to $w_b^b$ and bypass $P'$ from $w_{a'}^0$  to $w_{b'}^{b'}$ are said to be \emph{crossing} if either $a < a'$ and $b' < b$, or  $a > a'$ and $b' > b$.

\begin{figure}
    \centering
    \begin{tikzpicture}[scale=.7, use Hobby shortcut,
        vertex/.style={inner sep=1pt,circle,draw,fill}]
      \path[clip] (-.5,-.5) rectangle (10.5,6.5);
      
      \path[fill=lightgray] (0,2)--(2,0)--(6,0)--(0,6)--cycle;
      
      \foreach \i in {0,...,30}
      {
        \draw (0,\i)--(20,\i);
        \draw (\i,0)--(\i,20);
        \draw (0,\i)--(\i,0);
      } 
      \draw[ultra thick] (5,0)--(0,5);
      \end{tikzpicture}
      \caption{Triangular wedge. The bold path is the $5$-th layer $W_5$, the subgraph in the shaded area is the $2$--$6$-strip $W_{2,6}$.}
      \label{fig:wedge}
\end{figure}
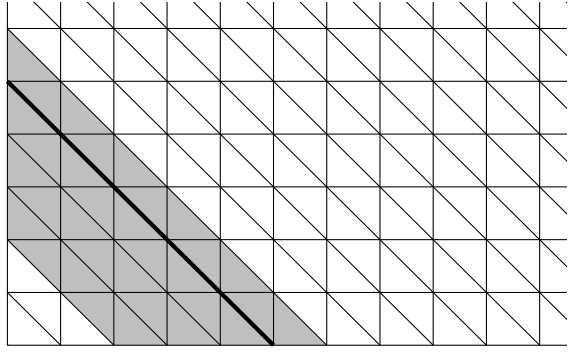

The following observation should be clear, see Figure \ref{fig:permutationpaths} for a sketch.
\begin{observation}
\label{obs:permutationpaths}
Let $m < n$ and let $P$ and $P'$ be disjoint, crossing bypasses for $W_{m,n}$.
The following statements hold for any $k\leq m$.
\begin{enumerate}[label=(\arabic*)]
    \item There are disjoint paths in $W_{m,n}$ connecting $w_m^i$ to $w_n^i$ for $0 \leq i \leq k$.
    \item There are disjoint paths in $W_{m,n} \cup P$ connecting $w_m^i$ to $w_n^{(i+1 \mod k)}$ for $0 \leq i \leq k$.
    \item There are disjoint paths in $W_{m,n} \cup  P \cup P'$ connecting $w_m^0$ to $w_n^k$, $w_m^k$ to $w_n^0$, and $w_m^i$ to $w_n^i$ for $0 < i < k$.
\end{enumerate}
The paths in all three cases can be chosen such that they intersect $W_m$ only in their initial vertices and $W_n$ only in their terminal vertices.
\end{observation}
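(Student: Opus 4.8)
The plan is to write the required path systems down explicitly in the coordinates of $W$, using three kinds of edges: the ``vertical'' edges of type $(0,1)$, which inside a column connect $w_\ell^i$ to $w_{\ell+1}^i$; the ``diagonal'' edges of type $(1,-1)$, which inside a layer $W_\ell$ connect $w_\ell^i$ to $w_\ell^{i+1}$; and the edges of type $(1,0)$, which connect $w_\ell^i$ to $w_{\ell+1}^{i+1}$. Since $k\le m$, all indices that occur stay inside the allowed range in every layer between $m$ and $n$; the case $k=0$ is trivial, so assume $k\ge1$. For part~(1) one simply takes the vertical paths $Q_i=(w_m^i,w_{m+1}^i,\dots,w_n^i)$ for $0\le i\le k$: they lie in pairwise distinct columns, hence are disjoint, and each $Q_i$ meets $W_m$ only in $w_m^i$ and $W_n$ only in $w_n^i$ because all its other vertices lie strictly between layers $m$ and $n$.

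For part~(2) I would route the $k$ ``regular'' strands $w_m^i\to w_n^{i+1}$ ($0\le i<k$) by first taking the type-$(1,0)$ edge from $w_m^i$ to $w_{m+1}^{i+1}$ and then going straight up column $i+1$ to $w_n^{i+1}$, and route the single ``overflow'' strand $w_m^k\to w_n^0$ out of the strip: take the type-$(1,0)$ edge from $w_m^k$ to $w_{m+1}^{k+1}$, run along layer $W_{m+1}$ to its last vertex $w_{m+1}^{m+1}$, continue along the bottom row to $w_b^b$, cross the bypass $P$ to $w_a^0$, and climb the left column up to $w_n^0$. The regular strands occupy columns $1,\dots,k$; the overflow strand, apart from its initial vertex $w_m^k$, uses only layer $W_{m+1}$ in positions $\ge k+1$, the bottom row, the bypass, and the left column in layers $\ge a>m$. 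From this one reads off that the $k+1$ strands are pairwise disjoint, that the overflow strand meets $W_m$ only in $w_m^k$ (the reason for peeling off layer $W_m$ at the very first edge), and that it meets $W_n$ only in $w_n^0$.

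For part~(3) the same idea works, but now both extreme strands leave the strip, one through each bypass, and the crossing hypothesis is exactly what keeps their detours apart. Assume without loss of generality that $a<a'$ and $b'<b$. Route the middle strands $w_m^i\to w_n^i$ ($0<i<k$) vertically as in part~(1), in columns $1,\dots,k-1$; route $w_m^0\to w_n^k$ by climbing the left column from $w_m^0$ to $w_a^0$, crossing $P$ to $w_b^b$, running back along layer $W_b$ to $w_b^k$, and climbing column $k$ up to $w_n^k$; and route $w_m^k\to w_n^0$ by the ``peel off, run along $W_{m+1}$, then along the bottom row'' trick of part~(2) to reach $w_{b'}^{b'}$, crossing $P'$ to $w_{a'}^0$, and climbing the left column to $w_n^0$. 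The first extreme strand lives in the left column in layers $\le a$, in layer $W_b$, and in column $k$ in layers $\ge b$; the second lives in layer $W_{m+1}$ and the bottom row in layers $\le b'<b$, and in the left column in layers $\ge a'>a$. Together with $P\cap P'=\emptyset$ this yields disjointness of the two detours, and both avoid the vertical middle strands and meet $W_m$, $W_n$ only in the prescribed vertices.

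The only actual work is this bookkeeping in parts~(2) and~(3): checking that the explicitly described strands are pairwise disjoint and that each meets $W_m$ only in its initial vertex and $W_n$ only in its terminal vertex. The two points that need care are that every strand leaves layer $W_m$ along its first edge and returns to layer $W_n$ only along its last one, and that in part~(3) the two bypasses are used ``in the right order'' — the one with the smaller left endpoint for the strand that must travel farthest to the right — so that the two long detours cannot collide on the bottom row or the left column. Since all of this is just tracking coordinates, I would present part~(1) in one line and give parts~(2) and~(3) as short explicit constructions, each followed by a brief disjointness check.
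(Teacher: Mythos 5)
Your construction is correct and is essentially the paper's own argument made explicit: the paper offers no written proof for this observation, only the sketch in Figure~\ref{fig:permutationpaths}, and your vertical columns, shifted strands, and the two bypass detours (with the smaller-$a$ bypass serving the strand from $w_m^0$ and the smaller-$b$ bypass serving the strand from $w_m^k$) are exactly what that figure depicts. The coordinate bookkeeping you outline checks out, including the one subtle point that under the normalisation $a<a'$, $b'<b$ the two detours use disjoint portions of the left column and of the bottom row.
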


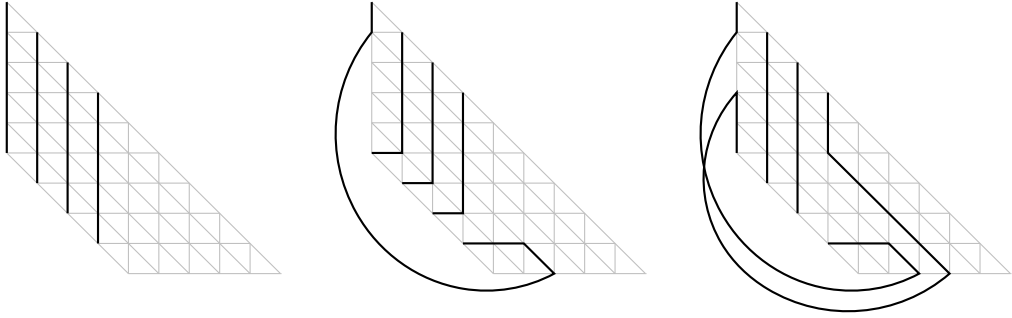
\begin{figure}
    \centering
    \begin{tikzpicture}[scale=.4, use Hobby shortcut,
        vertex/.style={inner sep=1pt,circle,draw,fill}]
      
      \foreach \i in {4,...,9}
        \draw[lightgray] (0,\i)--(\i,0);
      
      \foreach \i in {0,...,9}
      {
        \pgfmathtruncatemacro{\b}{max(0,4-\i)}
        \pgfmathtruncatemacro{\t}{9-\i)}
        \draw[lightgray] (\b,\i)--(\t,\i);
        \draw[lightgray] (\i,\b)--(\i,\t);
      }
      
      \foreach \i in {0,...,3}
        \draw[thick] (\i,4-\i) -- (\i,9-\i);

    \begin{scope}[xshift=12cm]
           \foreach \i in {4,...,9}
        \draw[lightgray] (0,\i)--(\i,0);
      
      \foreach \i in {0,...,9}
      {
        \pgfmathtruncatemacro{\b}{max(0,4-\i)}
        \pgfmathtruncatemacro{\t}{9-\i)}
        \draw[lightgray] (\b,\i)--(\t,\i);
        \draw[lightgray] (\i,\b)--(\i,\t);
      }
      
      \foreach \i in {0,...,2}
        \draw[thick] (\i,4-\i)--++(1,0)--++(0,4);
      \draw[thick] (3,1)--++(2,0)--++(1,-1)..(3,-.5)..(-.5,2)..(0,8)--(0,9);
      \end{scope}
      
       \begin{scope}[xshift=24cm]

           \foreach \i in {4,...,9}
        \draw[lightgray] (0,\i)--(\i,0);
      
      \foreach \i in {0,...,9}
      {
        \pgfmathtruncatemacro{\b}{max(0,4-\i)}
        \pgfmathtruncatemacro{\t}{9-\i)}
        \draw[lightgray] (\b,\i)--(\t,\i);
        \draw[lightgray] (\i,\b)--(\i,\t);
      }
      
      \foreach \i in {1,...,2}
        \draw[thick] (\i,4-\i)--++(0,5);
      \draw[thick] (3,1)--++(2,0)--++(1,-1)..(3,-.5)..(-.5,2)..(0,8)--(0,9);
      \draw[thick] (0,4)--(0,6)..(-.5,1)..(1,-.5)..(7,0)--(3,4)--(3,6);
      \end{scope}
      
      \end{tikzpicture}
      \caption{Routing paths in $W$ with crossing bypasses.}
      \label{fig:permutationpaths}
\end{figure}

The following lemma is a key ingredient in the proof of Theorem \ref{thm:no-uni-ctbl}. It is an immediate consequence of the above observations.
\begin{lemma}
\label{lem:tangled}
Let $(P_i)_{i \in \mathbb N}$ be an infinite family of disjoint bypasses for $W = W_{0,\infty}$ containing infinitely many crossing pairs of bypasses. Denote by $W_{m,n}^+$ the union of $W_{m,n}$ with all $P_i$ that are bypasses for $W_{m,n}$.
\begin{enumerate}[label=(\arabic*)]
    \item \label{itm:permutepaths}
    For any $k < m$, and any permutation $\pi$ of $\{0, \dots, k\}$ there is some $n >m$ and a set of disjoint paths in $W_{m,n}^+$ connecting $w_m^i$ to $w_n^{\pi(i)}$ for $0 \leq i \leq k$.
    \item \label{itm:mappaths}
    For any $k < m$, and any involution $\phi$ of $\{0, \dots, k\}$ there is some $n >m$ and a set of disjoint paths in $W_{m,n}^+$ connecting $w_m^i$ to $w_m^{\phi(i)}$ for $0 \leq i \leq k$.
    \item \label{itm:infinite-adjacent-paths}
    There is an infinite family $\mathcal P$ of (infinite) pairwise disjoint paths in $W^+ = W_{0,\infty}^+$ such that every pair of paths in $\mathcal P$ is connected by an edge.
\end{enumerate}
\end{lemma}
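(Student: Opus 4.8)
The plan is to deduce all three statements from Observation~\ref{obs:permutationpaths}. I would first record two preliminary facts. One: the $(k+1)$-cycle $\gamma_k\colon i\mapsto i+1\pmod{k+1}$ and the transposition $\tau_k=(0\ k)$ generate the full symmetric group on $\{0,\dots,k\}$, because conjugating $\tau_k$ by successive powers of $\gamma_k$ produces all adjacent transpositions; hence every permutation of $\{0,\dots,k\}$ is a positive word in $\gamma_k$ and $\tau_k$. Two: since the $P_i$ are pairwise disjoint, only finitely many of them have an endpoint in a layer $\le\ell$ for any given $\ell$, so among the infinitely many crossing pairs there remain, for every $\ell$, infinitely many crossing pairs both of whose bypasses have all endpoints in layers $>\ell$; such a pair consists of crossing bypasses for $W_{\ell,N}$ once $N$ is large enough. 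Now, for~\ref{itm:permutepaths}, I would write $\pi=g_t\cdots g_1$ as a word in $\gamma_k,\tau_k$ and build layers $m_0=m<m_1<\dots<m_t=n$ one step at a time: at step $j$ pick a not-yet-used crossing pair $P,P'$ of bypasses all of whose endpoints lie above $m_{j-1}$, take $m_j$ larger than all these endpoints, and apply item (2) of Observation~\ref{obs:permutationpaths} (to $P$) if $g_j=\gamma_k$, or item (3) (to $P,P'$) if $g_j=\tau_k$, to route disjoint paths in $W_{m_{j-1},m_j}^+$ from $w_{m_{j-1}}^i$ to $w_{m_j}^{g_j\cdots g_1(i)}$ meeting $W_{m_{j-1}}$ only in their first and $W_{m_j}$ only in their last vertex. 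Consecutive strips $W_{m_{j-1},m_j}^+$ and $W_{m_j,m_{j+1}}^+$ intersect only in $W_{m_j}$, which each relevant segment meets only at an endpoint, while non-consecutive strips are disjoint, so concatenating the segments over $j=1,\dots,t$ yields disjoint paths from $w_m^i$ to $w_n^{\pi(i)}$ meeting $W_m$ only in initial and $W_n$ only in terminal vertices; I would record this endpoint property for later use. (If $\pi$ is the identity, use the word $\gamma_k^{\,k+1}$, or apply item (1) of Observation~\ref{obs:permutationpaths} once.)

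For~\ref{itm:mappaths}, let $\{a_1,b_1\},\dots,\{a_r,b_r\}$ be the $2$-cycles of $\phi$ and let $\sigma$ be a permutation of $\{0,\dots,k\}$ with $\sigma(a_s)=2s-2$, $\sigma(b_s)=2s-1$ for every $s$ (and $\sigma$ carrying the fixed points of $\phi$ to $\{2r,\dots,k\}$). Apply~\ref{itm:permutepaths} to $\sigma$ to obtain $n>m$ and disjoint paths $R_0,\dots,R_k$ in $W_{m,n}^+$ with $R_i$ running from $w_m^i$ to $w_n^{\sigma(i)}$ and meeting $W_m$ only in $w_m^i$ and $W_n$ only in $w_n^{\sigma(i)}$. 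For each $s$ let $Q_s$ be $R_{a_s}$ followed by the edge $w_n^{2s-2}w_n^{2s-1}$ of $W_n$ and then $R_{b_s}$; this is a path from $w_m^{a_s}$ to $w_m^{b_s}$ since consecutive positions are adjacent in $W_n$ and $R_{a_s},R_{b_s}$ are disjoint. Distinct $R_i$ are disjoint and the joining edges sit at pairwise disjoint pairs of positions of $W_n$ and are traversed by no $R_i$, so the $Q_s$ are pairwise disjoint; moreover $Q_s$ meets $W_m$ only in $w_m^{a_s}$ and $w_m^{b_s}$, hence avoids every $w_m^c$ with $c$ a fixed point of $\phi$. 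Thus the $Q_s$ together with the trivial paths $\{w_m^c\}$ at the fixed points are the required disjoint paths.

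For~\ref{itm:infinite-adjacent-paths}, I would construct increasing initial segments of paths $Q_0,Q_1,\dots$ in stages, maintaining at each stage a ``checkpoint'' layer $\ell$ and a number $s\le\ell$ of active paths whose current endpoints are precisely $w_\ell^0,\dots,w_\ell^{s-1}$ in some order. Interleave two kinds of tasks so each is carried out cofinally, scheduling ``link $Q_i$ and $Q_j$'' only after both $Q_i$ and $Q_j$ have been introduced. To \emph{introduce} a new path: extend every active path straight up its column to a layer exceeding the new number of paths, then start the new path at the next free position. To \emph{link} $Q_i$ and $Q_j$: apply~\ref{itm:permutepaths} with a permutation taking the current position of $Q_i$ to $0$ and that of $Q_j$ to $1$, extend each active path along the corresponding path, so afterwards $Q_i$ and $Q_j$ end at the adjacent vertices $w_n^0$ and $w_n^1$; the edge between them then joins $Q_i$ to $Q_j$ permanently. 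As in~\ref{itm:permutepaths}, the successive strips used meet only in a single layer, touched by the relevant segments only at their endpoints, so the paths stay pairwise disjoint; since every active path is extended at every stage, the limits $Q_0,Q_1,\dots$ are pairwise disjoint infinite paths in $W^+$, any two joined by an edge.

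The crux is~\ref{itm:mappaths}: the naive approaches --- drawing the arcs of $\phi$ upward into the strip, or ``sort, turn around, unsort'' --- both fail, the first because a crossing matching cannot be drawn disjointly in a half-strip and the second because the sorting and the unsorting would need the same strip; the resolution is to let~\ref{itm:permutepaths} make the matching consist of consecutive pairs, so the turnarounds collapse to single edges of $W_n$. The only other work is routine bookkeeping: in~\ref{itm:permutepaths}, checking that a fresh crossing pair of bypasses confined to each successive strip is always available, and in~\ref{itm:infinite-adjacent-paths}, checking that the staged extensions stay disjoint.
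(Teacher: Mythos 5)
Your proof is correct and follows essentially the same route as the paper: generate $S_{k+1}$ by the cycle and a transposition and iterate Observation~\ref{obs:permutationpaths} over successive strips containing fresh crossing pairs for \ref{itm:permutepaths}, reduce \ref{itm:mappaths} to a permutation making matched indices adjacent and join them by edges of $W_n$, and build \ref{itm:infinite-adjacent-paths} by a staged extension that repeatedly uses \ref{itm:permutepaths} to make designated pairs of paths adjacent before adding a new path. Your bookkeeping (explicit fixed-point handling in \ref{itm:mappaths}, pair-by-pair scheduling instead of making all current pairs adjacent at each stage in \ref{itm:infinite-adjacent-paths}) is only a cosmetic variation on the paper's argument.
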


\begin{proof}
Note that by reordering the sequence of bypasses we may without loss of generality assume that $P_i$ and $P_{i+1}$ are crossing for infinitely many $i$.

For the proof of \ref{itm:permutepaths}, note that the cyclic permutation $x \mapsto x+1 \mod k$ and the transposition of $1$ and $k$ generate the symmetric group on $k$ elements. Further note that disjointness of the $P_i$ implies that any vertex of $W$ appears as an endpoint of at most one $P_i$. Hence for any $l$, all but finitely many $P_i$ lie in $W_{l,\infty}^+$, so there must be some $l'$ such that $W_{l,l'}^+$ contains a pair $P_i, P_{i+1}$ of bypasses satisfying the conditions of Observation~\ref{obs:permutationpaths}. Thus we can iterate Observation~\ref{obs:permutationpaths} and concatenate the corresponding paths to obtain the desired family of paths.

For \ref{itm:mappaths}, simply apply \ref{itm:permutepaths} to some permutation $\pi$ such that $|\pi(i) - \pi(j)| = 1$ whenever $\phi(i) = j$. Connecting the $w_m^i$--$w_n^{\pi(i)}$-paths by edges on $W_n$ gives the desired path family.
    
The following recursive construction proves \ref{itm:infinite-adjacent-paths}. Start with the single path consisting of a vertex $v_1^0$. Assume that we have for some $m$ and $k$ constructed disjoint paths in $W_{0,m}^+$ which end at $w_m^i$ for $0 \leq i \leq k$. Applying \ref{itm:permutepaths} with different permutations and concatenating the resulting paths, we obtain $n > m$ and a family of disjoint paths in $W_{0,n}^+$ ending at $w_n^i$ for $0 \leq i \leq k$ such that any pair of them is connected by an edge. Add the path consisting of the single vertex $w_n^{k+1}$ to this family and iterate. In the limit, we get infinitely many disjoint paths any pair of which is connected by an edge.
\end{proof}

Much of the remainder of this section will be devoted to constructing families of bypasses which will enable us to apply the above lemma.

Let $X$, $Y$, and $Z$ be isomorphic copies of $W$. We denote by $X_k$, $Y_k$, $Z_k$, $x_k^i$, $y_k^i$, $z_k^i$, $X_{m,n}$, $Y_{m,n}$, and $Z_{m,n}$ the respective copies of $W_k$, $w_k^i$, and $W_{m,n}$. The \emph{triple wedge} $\overline W$ is the graph obtained from the disjoint union of $X$, $Y$, and $Z$ by adding edges between  $x_k^k$ and $y_k^0$, between $y_k^k$ and $z_k^0$, and between $z_k^k$ and $x_k^0$ for all $k \in \mathbb N_0$, see Figure \ref{fig:triplewedge}. The \emph{annulus} $ \overline W_{m,n}$ is the subgraph of $\overline W$ induced by the vertices of $X_{m,n}$, $Y_{m,n}$, and $Z_{m,n}$. Call a vertex of $\overline W$ \emph{enclosed}, if its neighbourhood is entirely contained in one of the three wedges $X$, $Y$, or $Z$.

\begin{figure}
    \centering
    \begin{tikzpicture}[scale=.7, use Hobby shortcut,
        vertex/.style={inner sep=1pt,circle,draw,fill}]
      \path[clip] (-8.5,-4.5) rectangle (8.5,4.5);
      
      \begin{scope}[rotate=45]
      \path [fill=lightgray]
      ($(45:.3)+(0:4)$)--($(285:.3)+(330:4)$)--
      ($(285:.3)+(240:4)$)--($(165:.3)+(210:4)$)--
      ($(165:.3)+(120:4)$)--($(45:.3)+(90:4)$)--cycle;
       \path [fill=white]
      ($(45:.3)+(0:2)$)--($(285:.3)+(330:2)$)--
      ($(285:.3)+(240:2)$)--($(165:.3)+(210:2)$)--
      ($(165:.3)+(120:2)$)--($(45:.3)+(90:2)$)--cycle;
      
     \foreach \j in {0,120,240}
      {
      \begin{scope}[rotate=\j,shift={(45:.3)}]
      \foreach \i in {0,...,30}
      {
        \draw (0,\i)--(20,\i);
        \draw (\i,0)--(\i,20);
        \draw (0,\i)--(\i,0);
      } 
      \end{scope}
      \foreach \i in {0,...,20}
      {
        \draw[rotate=\j] ($(45:.3)+(0:\i)$)--($(285:.3)+(330:\i)$);
      }
      }
      
      \end{scope}
      \end{tikzpicture}
      \caption{The triple wedge $\overline{W}$. The subgraph in the shaded area is the annulus $\overline W_{2,4}$.}
      \label{fig:triplewedge}
\end{figure}
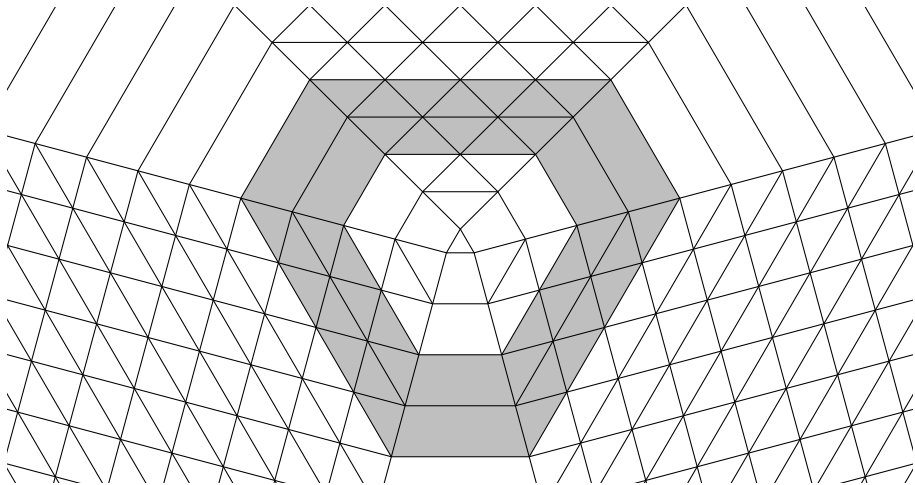

\begin{lemma}
\label{lem:matchingbypass}
Let $u$ and $v$ be non-adjacent vertices in $X_{m+1,n-1} \cup Y_{m+1,n-1}$. Assume that $u$ is an enclosed vertex. The graph obtained from $\overline W_{m,n}$ by adding the edge $uv$ contains a crossing pair of bypasses for $Z$.
\end{lemma}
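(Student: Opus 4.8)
The goal is to find, inside $\overline W_{m,n}$ augmented by the edge $uv$, a crossing pair of bypasses for $Z$, i.e.\ two disjoint $z_a^0$--$z_b^b$-paths whose endpoints appear in crossing order on $Z_m$ and $Z_n$. The natural route is to build one such path through each of the two "corners'' where $Z$ attaches to the rest of $\overline W$, namely the edges $x_k^k z_k^0$ (via $Z$'s left boundary, using the identification $z_k^0 \sim x_k^k$) and $y_k^k z_k^0$... more precisely the corners $z_k^k x_k^0$ and $y_k^k z_k^0$. So first I would fix the two "gateways'': pick suitable indices $m < a, a' < n$ and use the connecting edges $x_a^a y_a^0$-type structure together with the first and last layers of the annulus to produce, for each of the two gateways, a path that enters $Z$ at $z_a^0$ or $z_{a'}^0$ and leaves at $z_b^b$ or $z_{b'}^{b'}$, passing through $X \cup Y$ in between. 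The point of the added edge $uv$ is that without it the two gateway-paths would have to both run through $X_{m,n} \cup Y_{m,n}$ and could not be made disjoint while crossing; the edge $uv$ provides the extra connectivity (a "shortcut'') that lets one path jump past the other.

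Concretely, I would argue as follows. Route the first bypass $P$ as: start at $z_{a}^{a} = $ (via the corner edge) a vertex of $X_m$, say near $x_m^0$; travel along the left side / through the strip $X_{m,n}$ and $Y_{m,n}$ using the grid structure and the inter-wedge edges; and come back into $Z$ at a vertex $z_b^0$ on a different layer, re-entering via the corner edge $y_b^b z_b^0$. For the second bypass $P'$, do the symmetric thing but have it use the edge $uv$ to cross over $P$: since $u$ is enclosed, all of $u$'s neighbours lie in a single wedge (say $X$), so $u$ together with the edge $uv$ lets $P'$ leave the "$X$-region'' it is confined to and reach $v$ on the far side of $P$, thereby achieving the crossing configuration $a < a'$, $b' < b$ (or the reverse). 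One then checks the two paths can be taken internally disjoint and meeting $Z$ only at endpoints — this is where Observation \ref{obs:permutationpaths}, or rather the elementary grid-routing behind it, does the work: inside each strip $X_{m,n}$, $Y_{m,n}$ disjoint monotone paths are easy to route, and the only potential conflict is at the edge $uv$, which is used by exactly one of the two paths.

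The main obstacle I anticipate is the bookkeeping of \emph{which} layers $a, b, a', b'$ to choose and verifying the \emph{crossing} inequality rather than merely disjointness — i.e.\ making sure the endpoints on $Z_m$ and on $Z_n$ genuinely interleave. This forces a careful case analysis depending on the relative positions of $u$ and $v$: which wedge contains $u$'s neighbourhood, whether $v$ lies in the same wedge as $u$ or the other one, and how far apart their layer-indices are. A secondary subtlety is the degenerate/boundary behaviour when $u$ or $v$ is close to layer $m$ or $n$, or close to a corner vertex $x_k^0$, $x_k^k$, etc.; since $u, v \in X_{m+1,n-1} \cup Y_{m+1,n-1}$ we have at least one layer of slack on each side, and I would use exactly that slack (the layers $\overline W_m$ and $\overline W_n$, plus $W_{m+1}$ and $W_{n-1}$) to absorb the gateway routing. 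Once the crossing pair for $Z$ is exhibited in one representative case, the remaining cases follow by the evident symmetries of $\overline W$ (cyclic rotation among $X, Y, Z$ and reflection), so I would set those up first to cut the case analysis down to essentially one configuration.
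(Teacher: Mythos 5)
Your plan follows the same route as the paper's proof: one bypass is routed through the added edge $uv$ together with the two ring-paths in $\overline W_k$ and $\overline W_l$ that avoid the interior of $Z$, the other bypass avoids $uv$, and the argument splits according to whether $v$ lies in the same wedge as $u$. However, the proposal defers exactly the steps that carry the content of the lemma, and two of the heuristics you use to dismiss them are off. First, ``the only potential conflict is at the edge $uv$'' is not true: every bypass for $Z$ must run from a neighbour of some $z_a^0$ all the way around through $Y$ and $X$ to a neighbour of some $z_b^b$, and since $\overline W_{m,n}$ is planar, two \emph{disjoint} bypasses can only cross if one of them uses the new edge. The two paths therefore compete for the entire region $X_{m,n}\cup Y_{m,n}$, and producing a pair that is simultaneously disjoint and crossing is a global routing problem, not a local check at $uv$. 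Second, enclosedness of $u$ is not needed to let the $uv$-path ``escape its wedge'' (the edge $uv$ does that by itself); writing $u=x_k^i$, it is needed to guarantee $0<i<k$, i.e.\ to keep $u$ strictly between the two boundary columns of its wedge, which is what keeps the $uv$-bypass off the seam $x_m^m,x_{m+1}^{m+1},\dots,x_n^n$ (when $v\in Y$), respectively makes room for the second bypass inside the wedge (when $v\in X$).

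The substantive gap is the same-wedge case, say $u=x_k^i$ and $v=x_l^j$. Here the second bypass cannot simply follow the outer seam or a single monotone column: it must enter $X$ at ring $m$, travel along the column $x_t^{i-1}$ (which exists only because $i\ge 1$), make a local detour through $x_{k-1}^{i},x_k^{i+1},x_{k+1}^{i+1},x_{k+1}^{i}$ to get past $u$, and then continue to ring $n$; verifying disjointness from the $uv$-bypass then uses both the non-adjacency of $u$ and $v$ and a normalisation such as $i\le j$. None of this is recoverable from ``disjoint monotone paths are easy to route in the strip,'' and it is precisely where the hypotheses of the lemma do their work. You have correctly located the obstacles, but as written the proposal is an outline of the intended proof rather than a proof.
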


\begin{proof}
The following notation will be convenient in the proof: If  $a,b$ in $\overline W_k$ are vertices none of whose neighbourhoods is entirely contained in $Z$, then there is a unique $a$--$b$-path in $\overline W_k$ all of whose internal vertices lie in $X$ or $Y$. Let us denote this path by $P(a,b)$.

First assume that $u$ and $v$ lie in different wedges. By symmetry, we may without loss of generality assume that $u \in X$ and $v \in Y$. There are $i,j,k,l$ such that $u = x_k^i$ and $v= y_l^j$. The concatenation of $P(w_k^k,u)$, the edge $uv$ , and $P(v,w_l^0)$ gives a bypass $P$ for $Z$. The concatenation of $P(w_m^m,x_m^m)$, the path $x_m^m,x_{m+1}^{m+1}, x_{m+2}^{m+2},  \dots ,x_n^n$, and $P(x_n^n,w_n^0)$ gives another bypass $P'$ for $Z$.
Note that since $u$ is enclosed, we know that $0<i<k$, hence $P$ does not contain $x_t^t$ for any $t$. Since $m<k,l<n$, we conclude that $P$ and $P'$ are disjoint and crossing.

If $u$ and $v$ lie in the same wedge, we may without loss of generality assume that they both lie in $X$. There are $i,j,k,l$ such that $u = x_k^i$ and $v= x_l^j$. Let us assume that $i \leq j$, the case $i\geq j$ is completely analogous. As before, the concatenation of $P(w_k^k,u)$, the edge $uv$ , and $P(v,w_l^0)$ gives a bypass $P$ for $Z$. Define another bypass $P'$ for $Z$ by concatenation of
$P(w_m^m,x_m^{i-1})$, the path 
\[x_m^{i-1}, x_{m+1}^{i-1}, x_{m+2}^{i-1}, \dots ,x_{k-1}^{i-1},x_{k-1}^{i},x_{k}^{i+1},x_{k+1}^{i+1},x_{k+1}^{i}, x_{k+1}^{i-1}, x_{k+2}^{i-1}, \dots,
x_n^{i-1}\]
and $P(x_n^{i-1}, w_n^0)$. Note that the only vertices $x_s^t$ on $P'$ for which $s>m$ and $t>i$  are neighbours of $u$. If $P$ contained one of these vertices then either $v$ would be a neighbour of $u$, or $i > j$ both of which contradict our assumptions. $m<k,l<n$, we again conclude that $P$ and $P'$ are disjoint and crossing.
\end{proof}

The following construction plays a key role in Lemma \ref{lem:containwplus} below and thus in the proof of Theorem \ref{thm:no-uni-ctbl}. Let $G$ be a graph. 
The \emph{infinitely parallel blow-up} $G^{\parallel}$ of $G$ is the graph obtained from $G$ by replacing every edge  $e$ of $G$ by a countably infinite set of paths of length $2$ connecting the endpoints of $e$. Clearly, all vertices of $G^{\parallel}$ have either infinite degree, or degree $2$. Note that if $G$ is a countable planar graph, then $G^\parallel$ is planar by Theorem \ref{thm:countableplanar}. We call the vertices of infinite degree in (a subdivision of) $G^\parallel$ \emph{original vertices}, and the vertices of degree $2$ \emph{new vertices}. If no confusion is possible, we identify original vertices with the corresponding vertices in $G$, in particular we call a pair of original vertices \emph{adjacent} if the corresponding vertices in $G$ are adjacent.

For $k \in \mathbb N$, let $C_k$ be the $4$-cycle $x_k^k$, $y_k^0$, $y_{k+1}^0$, $x_{k+1}^{k+1}$ in $\overline W$. For $\alpha\in \{0,1\}^\mathbb N$, denote by $\overline W(\alpha)$ the graph obtained from $\overline W$ by adding the edge from $x_k^k$ to $y_{k+1}^{0}$ if $\alpha_k = 0$ and from $x_{k+1}^{k+1}$ to $y_{k}^{0}$ if $\alpha_k = 1$. Note that $\overline W(\alpha)$ is planar since each $C_k$ is a facial cycle in the embedding of $\overline W$ shown in Figure \ref{fig:triplewedge}. Denote by $\overline W_{m,n}(\alpha)$ the subgraph of $\overline W(\alpha)$ induced by the vertices of $\overline W_{m,n}$.

\begin{lemma}
\label{lem:containwplus}
Let $G$ be a countable graph. Let $A \subseteq \{0,1\}^\mathbb N$ be an uncountable set, and assume there are $G$-embeddings $\iota_\alpha$ of $\overline W_{m,\infty}(\alpha)^\parallel$ which agree on $V(\overline W_{m})$. Let $\mathcal I = \{\iota_\alpha \mid \alpha \in A\}$

There is a graph $W^+$ obtained from $W_{m,\infty}$ by adding an infinite family of disjoint bypasses containing infinitely many crossing pairs, and a $G$-embedding $\iota$ of $W^+$ such that there is $\xi \in \{x,y,z\}$ with $\iota(w_m^i) = \mathcal I(\xi_m^i)$ for $0\leq i \leq m$.
\end{lemma}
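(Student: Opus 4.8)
My plan is to use the uncountability of $A$ together with a pigeonhole/Ramsey-type argument over the $G$-embeddings $\iota_\alpha$ to find a single pattern of bypasses for one of the three wedges $X$, $Y$, $Z$. The key point is that there are only countably many ways to embed the bounded piece $\overline W_m$ into the countable graph $G$ — in fact the $\iota_\alpha$ already agree on $V(\overline W_m)$ — and, more importantly, only countably many choices for the common image of any fixed finite sub-structure. Since each $\alpha \in \{0,1\}^{\mathbb N}$ adds, for each $k$, one of two ``diagonal'' edges to the facial $4$-cycle $C_k$, the embedded graph $\iota_\alpha(\overline W_{m,\infty}(\alpha)^\parallel)$ contains, for each $k$ with $\alpha_k$ a given value, an extra path in $G$ joining the images of $x_k^k$ to $y_{k+1}^0$ (or $x_{k+1}^{k+1}$ to $y_k^0$). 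I want to show that uncountably many $\alpha$'s realise ``the same'' such extra path infinitely often in a way that produces a genuinely non-planar feature relative to one of the wedges.

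First I would set up the counting. For $\alpha \in A$, consider the sequence of images under $\iota_\alpha$ of the new vertex subdividing the extra edge added at level $k$ (there are countably many new vertices available on each of the infinitely many parallel copies, but the embedding picks one particular path in $G$, hence one internal route). Because $G$ is countable, for each pair $k < \ell$ the pair of routes at levels $k$ and $\ell$ takes values in a countable set. Using that $A$ is uncountable, I would iterate a Cantor–Bendixson / $\Delta$-system style pruning: pass to an uncountable $A' \subseteq A$ such that for all $\alpha, \beta \in A'$ the embeddings $\iota_\alpha, \iota_\beta$ agree on larger and larger initial segments of this diagonal-edge data, and in particular such that there exist two fixed indices $\alpha, \beta \in A'$ differing in infinitely many coordinates $k$, for each of which the routes $\iota_\alpha$ and $\iota_\beta$ use are distinct but both are available in $G$. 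Concretely, $\iota_\alpha$ and $\iota_\beta$ then give, at infinitely many levels $k$, two internally disjoint $G$-paths between $\iota(x_k^k) = \mathcal I(x_k^k)$-type vertices, one realising $\alpha_k = 0$ and one realising $\beta_k = 1$; disjointness comes from the fact that distinct edges of $\overline W(\gamma)^\parallel$ are embedded by internally disjoint paths, and by further pruning we can also make the route for $\alpha$ at level $k$ disjoint from the route for $\beta$ at level $k'$ for $k \ne k'$.

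Next I would translate these extra paths into bypasses. By Lemma~\ref{lem:matchingbypass}, once we have, inside the embedded $\overline W_{m,n}$, an extra edge (here: an extra $G$-path, which after contracting its interior behaves like an edge $uv$) with $u$ enclosed and $u,v$ in $X_{m+1,n-1} \cup Y_{m+1,n-1}$, we get a crossing pair of bypasses for $Z$. The diagonal edges of $C_k$ join $x_k^k$ to $y_{k+1}^0$ or $x_{k+1}^{k+1}$ to $y_k^0$; these endpoints are \emph{not} enclosed (they lie on the interface with $Z$), so I cannot apply Lemma~\ref{lem:matchingbypass} directly to a single such edge. The fix is to combine \emph{two} diagonal edges coming from $\alpha$ versus $\beta$ at the same level $k$: together with the cycle $C_k$ they form a $\theta$-graph whose union with the edges of $\overline W$ routes a short path ``behind'' an enclosed vertex — more precisely, the pair of $\alpha$/$\beta$-routes at level $k$, together with appropriate paths $P(\cdot,\cdot)$ through $X$ and $Y$ as in the proof of Lemma~\ref{lem:matchingbypass}, yields a bypass for $Z$, and two such constructions at a crossing-inducing pair of levels give a crossing pair. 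Doing this at infinitely many levels $k$ produces the required infinite family of disjoint bypasses for $Z$ containing infinitely many crossing pairs; these all live in $G$ via the restrictions of $\iota_\alpha$ and $\iota_\beta$, which agree on $V(\overline W_m)$ and hence on $\mathcal I$.

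Finally I would assemble the $G$-embedding $\iota$ of $W^+$. Take $W = W_{m,\infty}$ to be the copy $Z_{m,\infty}$, relabelling $w_m^i := z_m^i$ (so $\xi = z$), take $W^+$ to be $Z_{m,\infty}$ together with the bypasses constructed above, and let $\iota$ be the restriction of (a common refinement of) $\iota_\alpha$ and $\iota_\beta$ to this subgraph; since $z_m^i \in V(\overline W_m)$ we get $\iota(w_m^i) = \mathcal I(z_m^i)$ for $0 \le i \le m$, as required. Disjointness of the bypasses from $Z$ and from each other, and the presence of infinitely many crossing pairs, are exactly what Lemma~\ref{lem:matchingbypass} and the level-by-level construction guarantee.

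\textbf{Main obstacle.} I expect the genuinely delicate part to be the bookkeeping in the uncountable pruning step: I need to extract, from the uncountable family $A$, two indices (or an uncountable subfamily converging to such a pair) whose embeddings differ on infinitely many diagonal edges while the associated $G$-paths are pairwise internally disjoint across \emph{all} the relevant levels simultaneously — not just pairwise. Handling the countably many ``collision'' constraints (route at level $k$ meeting route at level $k'$, or meeting the fixed embedded skeleton $\overline W_{m,n}$) while keeping the surviving set uncountable requires a careful iterated refinement, and making sure that after all this the endpoints land in $X_{m+1,n-1} \cup Y_{m+1,n-1}$ with the enclosed-vertex hypothesis of Lemma~\ref{lem:matchingbypass} satisfied is where most of the real work lies.
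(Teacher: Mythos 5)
There is a genuine gap. Your entire construction hinges on extracting, from the uncountable family $A$, embeddings that agree on the whole infinite skeleton $V(\overline W_{m,\infty})$ while differing in infinitely many diagonal choices: only then do the two diagonal paths at level $k$ have matching endpoints in $G$ and combine into a single embedded graph containing both diagonals of $C_k$. But the hypothesis only gives agreement on $V(\overline W_m)$, and an iterated pruning of the kind you describe produces, for each $n$, an uncountable subfamily agreeing on $V(\overline W_{m,n})$ — the intersection over all $n$ may well be empty or a single embedding. (Uncountably many functions from a countable set into a countable set need not contain even two that agree everywhere yet come from sequences $\alpha,\beta$ differing in infinitely many coordinates.) If no uncountable subfamily agrees on all of $V(\overline W_{m,\infty})$, then for large $k$ the vertices $x_k^k$, $y_k^0$, etc.\ have \emph{different} images under different $\iota_\alpha$, the ``$\alpha$-route'' and ``$\beta$-route'' at level $k$ are paths between unrelated vertices of $G$, and your $\theta$-graph at $C_k$ simply does not exist. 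This is precisely the case distinction the paper makes: your argument is essentially its first (easy) case, and the second case is where almost all of the work lies.

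In that second case the paper turns the \emph{disagreement} of the embeddings into the extra structure: it recursively locates a first vertex $a$ on whose neighbourhood an uncountable agreeing subfamily $\mathcal K$ and some outlier $\kappa$ disagree, and uses a collision $\kappa(x)=\mathcal K(y)$ for distinct $x,y$ (or, failing that, a disagreement on the interior of a path in the neighbourhood of $a$) to manufacture a $G$-path between the images of two \emph{non-adjacent} vertices $u,v$ with $u$ enclosed; Lemma~\ref{lem:matchingbypass} then converts each such path into a crossing pair of bypasses, and a careful recursion (keeping an uncountable subfamily alive and the new paths disjoint from everything built so far) yields infinitely many of them. Your proposal invokes Lemma~\ref{lem:matchingbypass} only as a patch for the non-enclosed endpoints of the diagonal edges — a problem the paper's first case avoids by building the two crossing bypasses from the diagonals directly — and contains no mechanism for producing enclosed-endpoint chords when the embeddings fail to agree globally. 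As written, the proof establishes the lemma only under the additional (and unwarranted) assumption that an uncountable subfamily of $\mathcal I$ agrees on all of $V(\overline W_{m,\infty})$.
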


\begin{proof}To simplify notation, throughout this proof we let $H = \overline W_{m,\infty}$, let $H_n = \overline W_{m,n}$, let $H(\alpha) = \overline W_{m,\infty}(\alpha)$, and let $H_n(\alpha) = \overline W_{m,n}(\alpha)$.

First assume that $\mathcal I$ agrees on $V(H)$. Let $K \subseteq \mathbb N$ be the set of all $k$ for which there are $\alpha, \beta \in A$ with $\alpha_k \neq \beta_k$. Since $\mathcal I$ is infinite, the set $K$ is infinite as well. Let $E^+$ be the union of all $E(H(\alpha))$ for $\alpha \in A$. Note that $E^+$ contains all edges of $\overline W$ and both diagonals of the cycle $C_k$ for every $k \in K$. Let $H^+$ be the graph with vertex set  $V(H)$ and edge set $E^+$. 

It is easy to see that there is $G$-embedding of $H^+$. For every edge $uv \in E^+$, the images of the infinitely many paths connecting $u$ to $v$ in $H(\alpha)^\parallel$ under the embedding $\iota_\alpha$ are internally disjoint from $\mathcal I(V(H))$. Thus we can pick an enumeration of the edges in $E^+$ and for each edge $uv \in E^+$ pick a $\mathcal I(u)$--$\mathcal I(v)$-path $P_{uv}$ which is disjoint from all paths chosen for the preceding edges.

For every $k \in K$, the graph $H^+$ contains a pair of crossing bypasses for $Z$. One is obtained by concatenating the path consisting of all vertices of $X_k$, the edge from $x_k^k$ to $y_{k+1}^0$, and the path consisting of all vertices of $Y_{k+1}$. The other is obtained by concatenating the path consisting of all vertices of $X_{k+1}$, the edge from $x_{k+1}^{k+1}$ to $y_k^0$, and the path consisting of all vertices of $Y_{k}$. Note that among these bypasses there is an infinite disjoint family with infinitely many crossing pairs. Let $W^+$ be the wedge $Z$ together with such a family of bypasses, and let $\iota$ be the restriction of the $G$-embedding of $H^+$ to $W^+$; by definition we have $\iota(w_m^i) = \mathcal I(Z_m^i)$ for $0 \leq i \leq m$. This finishes the proof in case $\mathcal I$ agrees on $V(H)$.


If $\mathcal I$ contains an uncountable subfamily which agrees on $V(H)$, then the same argument as above can be applied to this subfamily. Hence from now on assume that there is no such subfamily. In the remainder of the proof, we ignore the additional edges in $H(\alpha)$ and view $\mathcal I$ as a family of $G$-embeddings of $H^\parallel$.

For each $i \in \mathbb N_0$ we recursively define 
\begin{itemize}
    \item an integer $n_i$,
    \item a set $M_i$ of edges, and
    \item a $G$-embedding $\iota_i$ of the graph $H_i^+$ obtained from $H_{n_i}$ by adding all edges in $M_i$
\end{itemize}
satisfying the following properties:
\begin{enumerate}[label = (\roman*)]
    \item \label{itm:ngrows}
    $n_0 = m$ and $n_i > n_{i-1}$ for $i > 0$,
    \item \label{itm:mgrows}
    $M_0 = \emptyset$, and $M_i \setminus M_{i-1} = \{m_i\}$ for $i > 0$; the endpoints of $m_i$ are not adjacent in $H$, contained in $V(H_{n_i}) \setminus V(H_{n_{i-1}})$, and at least one of them is enclosed,
    \item \label{itm:iotaagrees}
    $\iota_0$ agrees with $\mathcal I$ on $V(H_m)$, and the restriction of $\iota_i$ to $H_{i-1}^+$ is $\iota_{i-1}$ for $i>0$.
\end{enumerate}

Before carrying out the recursive construction, let us show how the resulting sequences can be used to finish the proof of the lemma. Let $H^+ = \lim_{i \to \infty} H_i^+$. Since $n_i$ and $M_i$ are strictly increasing, $H^+$ is the graph obtained from $H$ by adding the (infinite) set $M = \bigcup_{i \in \mathbb N} M_i$. Note that there is an infinite subset $M' \subseteq M$ such that no edge in $M'$ has an endpoint in one of the three wedges $X$, $Y$, or $Z$; by symmetry we may assume that no edge in $M'$ has an endpoint in $Z$. By Lemma \ref{lem:matchingbypass} the graph $H^+$ contains an infinite set of disjoint bypasses for $Z_{m,\infty}$ containing infinitely many crossing pairs. Let $W^+$ be the union of $Z_{m,\infty}$ and these bypasses. Lemma \ref{lem:limitembedding} tells us that there is a $G$-embedding $\iota$ of $H^+$ which agrees with each $\iota_i$ on $H_i^+$, and thus agrees with $\mathcal I$ on $V(H_{m})$. The restriction of $\iota$ to $W^+$ is the desired $G$-embedding of $W^+$.

It remains to construct the sequences $n_i$, $M_i$, and $\iota_i$. Alongside these sequences, for every $i$ we also construct 
\begin{itemize}
    \item an uncountable family $\mathcal J_i \subseteq \mathcal I$ 
\end{itemize} 
such that 
\begin{enumerate}[resume,label = (\roman*)]
    \item \label{itm:jagrees}
    $\mathcal J_i \cup \{\iota_i\}$ agrees on $V(H_{n_i})$, and
    \item \label{itm:imagedisjoint}
    $\iota_i(H_i^+) \cap \kappa(V(H))\subseteq \kappa(V(H_{n_i}))$ for every $\kappa \in \mathcal J_i$.
\end{enumerate}
Note that for any uncountable family $\mathcal J \subseteq \mathcal I$ and any finite set $S$ of vertices and edges of $H^\parallel$, there is an uncountable subfamily of $\mathcal J$ which agrees on $S$. This is due to the fact that $G$ is countable and thus there are only countably many possible images of $S$. This fact will be used at several points in the construction.

Let $n_0 = m$, and let $M_0 = \emptyset$. For any pair $a,b$ of adjacent vertices in $H_m$ pick a path $P_{ab}$ of length $2$ connecting them. Let $\mathcal J_0 \subseteq \mathcal I$ be an uncountable family which agrees on every $P_{ab}$. Define a $G$-embedding $\iota_0$ of  $H_0^+ = H_m$ by $\iota(v) = \mathcal I(v)$ for every vertex of $H_0^+$, and $\iota(ab) = \mathcal J_0(P_{ab})$ for every edge of $H_0^+$. Properties \ref{itm:ngrows}--\ref{itm:imagedisjoint} are easily seen to hold for $n_0$, $M_0$, $\iota_0$, and $\mathcal J_0$.

The recursive step from $i$ to $i+1$ rests on the following claim.
\begin{clmnonum}
For some $n>n_i$ there is
\begin{enumerate}[label=(\arabic*)]
        \item \label{itm:subfamily}
        an uncountable subfamily $\mathcal J \subseteq \mathcal J_i$ which agrees on $V(H_n)$,
        \item \label{itm:vertexpair}
        $u,v \in V(H_n)\setminus V(H_{n_i})$ which are not adjacent in $H$ such that $u$ is enclosed, and
        \item \label{itm:matchingpath}
        a $\mathcal J(u)$--$\mathcal J(v)$-path $P$ such that $P \cap \iota_i(H_i^+)$ is empty, and $P \cap \kappa(V(H)) = \mathcal J(\{u,v\})$  for every $\kappa \in \mathcal J$.
\end{enumerate}
\end{clmnonum}
Let us assume first that the claim is true. We let $n_{i+1} = n$ and $E_{i+1} = E_i \cup \{uv\}$ for the $n$, $u$, and $v$ provided by the claim. Let $N$ be the number of vertices contained in $\iota_i(H_i^+) \cup P$. For every pair $a,b$ of adjacent vertices in $H_n$ pick $N+1$ different $a$--$b$-paths of length $2$ in $H^\parallel$, and let $\mathcal J_{i+1} \subseteq \mathcal J$ be an uncountable set which agrees on all of these paths. Note that these paths are pairwise internally disjoint, and (by the pigeonhole principle) for each pair $a,b$, the image of at least one such path $P_{ab}$ is internally disjoint from $\iota_i(H_i^+) \cup P$. Let
\[
    \iota_{i+1}(x) =
    \begin{cases}
        \iota_i(x) & \text{if $x \in V(H_i^+)\cup E(H_i^+)$},\\
        \mathcal J_{i+1}(x) & \text{if $x \in  V(H_{n_{i+1}}) \setminus V(H_{n_i})$},\\
        \mathcal J_{i+1}(P_{ab}) &\text{if $x=ab \in E(H_{n_{i+1}}) \setminus E(H_{n_i})$},\\
        P & \text{if $x=uv$}.
    \end{cases}
\]

We briefly check that $\iota_{i+1}$ is a $G$-embedding of $H_{i+1}^+$. First note that the image of any edge is a path connecting the images of the respective endpoints. For edges in $E(H_i^+)$ this follows from the induction hypothesis, for edges in $E(H_{n_{i+1}}) \setminus E(H_{n_i})$ it follows from the fact that every element of $\mathcal J_{i+1}$ is a $G$-embedding of $H^\parallel$ which agrees with $\iota_i$ on $V(H_i^+)$, and for the edge $uv$ it follows from \ref{itm:matchingpath} in the above claim. The path $\iota_{i+1}(uv)$ is internally disjoint from $\iota_i(H_i^+) =\iota_{i+1}(H_i^+)$ by \ref{itm:matchingpath}. The paths $\mathcal J_{i+1}(P_{ab})$ are internally disjoint from $P$ and $\iota_i(H_i^+)$ by definition, and they are internally disjoint from one another since every element of $\mathcal J_{i+1}$ is a $G$-embedding.

Properties \ref{itm:ngrows}, \ref{itm:mgrows}, and \ref{itm:iotaagrees} follow from the above claim and the resulting definitions of $n_{i+1}$, $M_{i+1}$, and $\iota_{i+1}$. For property \ref{itm:jagrees} note that $\mathcal J_{i+1} \cup \{\iota_{i+1}\}$ agrees on $V(H_{n_{i+1}}) \setminus V(H_{n_i})$ by definition of $\iota_{i+1}$ and on $V(H_{n_i})$ by the induction hypothesis since $\mathcal J_{i+1} \subseteq \mathcal J_i$. For \ref{itm:imagedisjoint}, note that $\kappa(V(H))$ contains no internal vertex of $P$ by \ref{itm:matchingpath} above, and no internal vertex of any image $\mathcal J_{i+1}(P_{ab})$ because $\kappa$ is contained in $\mathcal J_{i+1}$. By the induction hypothesis, $\kappa(V(H))$ also does not contain an internal vertex of $\iota_{i+1}(e)= \iota_{i}(e)$ for any $e \in E(H_i^+)$. Hence $\iota_{i+1}(H_{n_i}) \cap \kappa(V(H)) \subseteq \iota_{i+1}(V(H_{n_i}))$, and \ref{itm:imagedisjoint} follows from \ref{itm:jagrees}.

It only remains to provide a proof for the above claim. 
We first show that it suffices to prove that the conclusion of the claim holds when we replace \ref{itm:matchingpath} by the weaker condition that there is
\begin{enumerate}[label = ($3'$)]
    \item \label{itm:matchingpathb}
    a $\mathcal J(u)$--$\mathcal J(v)$-path $P$ such that $P \cap \iota_i(H_i^+)$ is empty, and $P \cap \kappa(V(H_{n+1})) = \mathcal J(\{u,v\})$  for every $\kappa \in \mathcal J$.
\end{enumerate}
Assume that we have found a family $\mathcal J$, vertices $u,v$ and a $\mathcal J(u)$--$\mathcal J(v)$ path $P$ satisfying \ref{itm:subfamily}, \ref{itm:vertexpair}, and \ref{itm:matchingpathb} for some $n$. 
Let $P_j$ be the subpath of $P$ of length $j$ starting at $\mathcal J(u)$. Let $\mathcal K_j \subseteq \mathcal J$ consist of all $\kappa \in \mathcal J$ for which $\kappa(V(H))$ does not contain any internal vertex of $P_j$. Let $k$ be maximal such that $\mathcal K_k$ is uncountable.

If $P_k = P$, then we can simply replace $\mathcal J$ by $\mathcal K_k$ to satisfy the stronger condition \ref{itm:matchingpath}. 
Otherwise, let $t \neq \mathcal J(u)$ be the other endpoint of $P_k$. Since $V(H)$ is countable, there is a vertex $v' \in V(H)$ and an uncountably infinite family $\mathcal K' \subseteq \mathcal K_k$ such that $\kappa (v') = t$ for every $\kappa \in \mathcal K'$. Let $n'$ be such that $v' \in V(H_{n'})$, and let $\mathcal K'' \subseteq \mathcal J'$ be an infinite subfamily which agrees on $V(H_{n'})$. The image of $V(H)$ under $\kappa \in \mathcal K''$ does not contain any internal vertex of $P_k$ since $\mathcal K'' \subseteq \mathcal J_k$. Further note that $v'\notin V(H_{n+1})$ by condition \ref{itm:matchingpathb}, and thus $u$ and $v'$ are not adjacent. Hence $\mathcal K''$, the pair $u,v'$, and the path $P_k$ satisfy \ref{itm:subfamily}, \ref{itm:vertexpair}, and~\ref{itm:matchingpath}.

Finally, we need to show how to construct a family $\mathcal J$, vertices $u,v$ and a $\mathcal J(u)$--$\mathcal J(v)$ path $P$ satisfying \ref{itm:subfamily}, \ref{itm:vertexpair}, and \ref{itm:matchingpathb} for some $n$.

Let $U_1 = V(H_{n_i})$ and recursively define $U_k$ as the union of $U_{k-1}$ all neighbours (in $H$) of some enclosed vertex $a_k$ which has at most $2$ neighbours outside $U_{k-1}$.
It is not hard to see that we can pick the vertices $a_k$ in this construction such that every vertex of $H$ is contained in some $U_k$. Since $\mathcal J_i$ agrees on $V(H_{n_i})$ but not on $V(H)$ there is some $k$ such that $\mathcal J_i$ agrees on $U_k$ but not on $U_{k+1}$. Let $a = a_k$, let $n$ be large enough that $U_{k+1} \subseteq V(H_n)$, let $\mathcal K \subseteq \mathcal J_i$ be an uncountable family which agrees on $V(H_n)$, and let $\kappa \in \mathcal J_i$ be such that $\mathcal K \cup \{\kappa\}$ does not agree on $U_{k+1}$ (and thus does not agree on the neighbours of $a$).

Suppose first that there is a neighbour $x$ of $a$ and a vertex $y \in V(H_n)$ such that $\kappa(x) = \mathcal K(y)$. If $y$ is not adjacent to $a$, then we can choose an uncountable subfamily $\mathcal J\subseteq \mathcal K$ which agrees on $V(H_{n+1})$,  $u=a$, and $v=y$. Recall that $\kappa$ is an embedding of $H^\parallel$, hence $G$ contains infinitely many internally disjoint $\kappa(s)$--$\kappa(t)$-paths for any pair of adjacent vertices in $H$. In particular, there are infinitely many internally disjoint paths connecting $\mathcal J (a) = \kappa(a)$ to $\kappa (x) = \mathcal J(y)$. Among these paths we find one with the desired properties because $\iota_i(H_i^+)$ and $\mathcal J(V(H_{n+1}))$ are finite, and $\iota_i(H_i^+)$ does not contain $\mathcal J(a)$ or $\mathcal J(y)$ by property \ref{itm:imagedisjoint}.

So we may assume that $y$ is adjacent to $a$; in this case all neighbours of $a$ except $x$ and $y$ are contained in $U_k$. There is another common neighbour of $a$ and $x$ besides $y$, denote this neighbour by $z$. Note that $y$ and $z$ are not adjacent. At least one of $y$ and $z$ is enclosed because $a$ is enclosed, and the non-enclosed neighbours of any enclosed vertex are adjacent. As above, we can choose an uncountable subfamily $\mathcal J\subseteq \mathcal K$ which agrees on $V(H_{n+1})$,  $u=y$ and $v=z$ (or vice versa), and a $\mathcal J (z)$--$\mathcal J (y)$-path with the desired properties among the infinitely many internally disjoint $\mathcal J (z)$--$\mathcal J (y)$-paths in $G$.

The above argument only required $n$ to be large enough for $U_{k+1} \subseteq V(H_n)$; in particular, it also works if we replace $n$ by $n+1$. Hence from now on let us assume that $\mathcal K$ agrees on $V(H_{n+1})$ and that no neighbour $x$ of $a$ satisfies $\kappa(x) = \mathcal K(y)$ for any $y \in H_{n+1}$. The neighbourhood of $a$ contains a path $P$ whose endpoints $u$ and $v$ are not adjacent such that $\mathcal K \cup \{\kappa\}$ agrees on $u$ and $v$ but not on the interior points of $P$. The same argument as above tells us that we can connect the images (under $\kappa$) of any two consecutive vertices of $P$ by a path which is disjoint from $\iota_i(H_i^+)$, and does not intersect $\mathcal K(V(H_{n+1}))$ except possibly in $\mathcal K(u) = \kappa(u)$ or $\mathcal K(v) = \kappa(v)$. The union of these paths contains a $\mathcal K(u)$--$\mathcal K(v)$-path $Q$. The family $\mathcal K$, the pair $u,v$, and the path $Q$ satisfy \ref{itm:subfamily}, \ref{itm:vertexpair}, and \ref{itm:matchingpathb}.
\end{proof}

\begin{proof}[Proof of Theorem \ref{thm:no-uni-ctbl}]
For the first part, note that there must be $G$-embeddings of $\overline W(\alpha)^\parallel$ for every $\alpha$. Uncountably many of these embeddings agree on $V(\overline W_m)$ for any $m$, so by Lemma \ref{lem:containwplus} there is a $G$-embedding of a graph $W^+$ consisting of $W_{m,\infty}$ and an infinite set of disjoint bypasses containing infinitely many crossing pairs. The graph $W^+$ contains an infinite complete minor by Lemma \ref{lem:tangled} \ref{itm:infinite-adjacent-paths}, and thus so does $G$.

For the second part, let $k \in \mathbb N$, let $m=k^2$, and let $H(\alpha)$ be defined as follows. Start with $\overline W_{m,\infty}(\alpha)$, and for each $\xi \in \{x,y,z\}$ add $k$ vertices $\xi_*^0, \dots , \xi_*^{k-1}$ and edges between $\xi_i^*$ and $\xi_{m}^{ik+j}$ for $0 \leq j < k$. It is not hard to see that $H(\alpha)$ is planar, and thus so is $H(\alpha)^\parallel$. Hence there is a $G$-embedding of $H(\alpha)^\parallel$ for every $\alpha$, and an uncountable family $\mathcal I$ of these embeddings agrees on $V(\overline W_{m})$ and every $\xi_*^i$. 

Let $G'$ be the graph obtained from $G$ by removing all images $\mathcal I(\xi_*^i)$ for $\xi \in \{x,y,z\}$ and $0 \leq i < k$. Each embedding $\mathcal I$ gives rise to a $G'$-embedding of $\overline W_{m,\infty}(\alpha)$. 

By Lemma \ref{lem:containwplus} there is a $G'$-embedding $\iota$ of a graph $W^+$ consisting of $W_{m,\infty}$ and an infinite set of disjoint bypasses containing infinitely many crossing pairs such that $\iota(w_{m}^i) = \mathcal I (\xi_{m}^i)$ for $0 \leq i \leq m$ for some $\xi \in \{x,y,z\}$. Lemma \ref{lem:tangled} \ref{itm:mappaths} implies that $W^+$ contains disjoint paths connecting $\xi_{m}^{ki+j}$ to  $\xi_{m}^{kj+i}$ for $0 \leq i < j < k$. The images of these paths under $\iota$ are disjoint $\mathcal I (\xi_{m}^{ki+j})$--$\mathcal I(\xi_{m}^{kj+i})$-paths $P_{ij}$ in $G'$.

There are infinitely many internally disjoint paths in $G$ connecting $\mathcal I (\xi_*^i)$ to $\mathcal I (\xi_{m}^{ki+j})$ since any element of $\mathcal I$ is a $G$-embedding of $H(\alpha)^\parallel$. Among these paths we can inductively find internally disjoint $\mathcal I (\xi_*^i)$--$\mathcal I (\xi_{m}^{ki+j})$-paths $Q_{ij}$ for $0 \leq i, j < k$ which are also internally disjoint from all paths $P_{ij}$.

We can define a $G$-embedding $\kappa$ of the complete graph on $k$ vertices $v_0, \dots v_{k-1}$ by letting $\kappa(v_i) = \mathcal I(\xi_*^i)$, and $\kappa(v_iv_j)$ the union of the paths $Q_{ij}$, $P_{ij}$, and $Q_{ji}$.
\end{proof}

\section{A universal, locally finite, planar graph}
\label{sec:unilocfin}

In this section, we construct a locally finite graph which is universal for the topological minor relation, thus proving Theorem \ref{thm:uni-locfin}.

Let $W \subseteq V(G)$ and let $\phi\colon W \to V(G)$ such that $\phi(W) \cap W =\emptyset$. A \emph{$\phi$-linkage} is a set of disjoint paths containing a $w$--$\phi(w)$-path $P_w$ for every $w \in W$. Let us call two cycles $C_1$ and $C_2$ \emph{well-linked} for any order reversing injection from $W \subseteq V(C_1)$ to $V(C_2)$ there is a $\phi$-linkage whose paths meet $C_1$ and $C_2$ only in their respective endpoints. Note that we take order reversing functions $\phi$ so that two facial cycles in a planar graph can be well linked. A \emph{$m$--$n$-mesh} is a planar graph together with a pair of disjoint  well-linked facial cycles whose lengths are $m$ and $n$ respectively. It is easy to see that $n$-$m$-meshes exist for all $m$ and $n$, for instance we may start with a Cartesian product $C_N \square P_N$ where $N$ is much larger than $n$ and $m$ and connect cycles of length $n$ and $m$ to the two facial cycles of length $N$ in an appropriate way.

For every $n \in \mathbb N$, let $M_1$ and $M_2$ be two $(2n)$-$(n^2)$-meshes. Denote the pair of well-linked cycles in $M_i$ by $C_i = (v_{1,i},\dots, v_{2n,i})$ and $C'_i = (v_{1,i},\dots, v_{n^2,i})$. Let $\MES(n)$ be the graph obtained by adding an additional vertex $z$, connecting $z$ to every vertex of $C_1$, and adding edges between $v'_{kn,1}$ and $v'_{kn,2}$ for $1\leq k \leq n$, see Figure \ref{fig:meshes}. We call $M_1$ the \emph{inner mesh}, $M_2$ the \emph{outer mesh}, and $z$ the \emph{centre} of $\MES(n)$. The cycles $C_1'$ and $C_2'$ are called the \emph{inner perimeter} and \emph{outer perimeter}, respectively, and $C_2$ is called the \emph{boundary} of $\MES(n)$. The edges connecting the outer perimeter to the inner perimeter are called the \emph{spokes} of $\MES(n)$. Moreover, the $n$ cycles of length $2n+2$ consisting of a path of length $n+1$ on $C_1'$ and a path of length $n+1$ on $C_2'$ are called the \emph{attachment cycles} of $\MES(n)$. Note that $\MES(n)$ has a plane embedding such that the boundary and all attachment cycles are facial cycles.

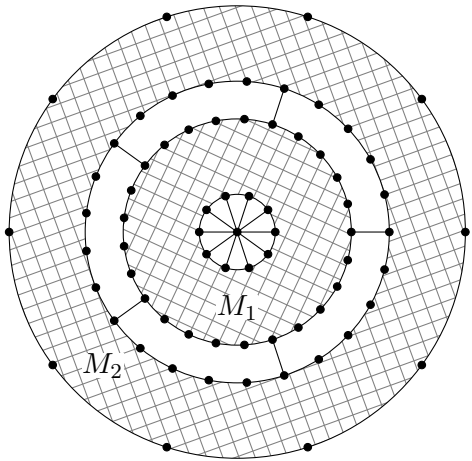
\begin{figure}
    \centering
    \begin{tikzpicture}[vertex/.style={inner sep=1pt,circle,draw,fill}]
      \begin{scope}
      \path[clip] (0,0) circle (3);
      \begin{scope}[scale=.2,rotate=20]
        \foreach \i in {-20,...,20}
        {
          \draw[color=gray] (-20,\i)--(20,\i);
          \draw[color=gray] (\i,-20)--(\i,20);
        }
       \end{scope}
      \path[fill=white] (0,0) circle (2);
      \end{scope}
       
      \begin{scope}
      \path[clip] (0,0) circle (1.5);
      \begin{scope}[scale=.2,rotate=65]
        \foreach \i in {-20,...,20}
        {
          \draw[color=gray] (-20,\i)--(20,\i);
          \draw[color=gray] (\i,-20)--(\i,20);
        }
      \end{scope}
      \path[fill=white] (0,0) circle (.5);
      \end{scope}
        \draw (0,0) circle (.5);
        \draw (0,0) circle (1.5);
        \draw (0,0) circle (2);
        \draw (0,0) circle (3);
        \foreach \i in {1,...,10}
        {
          \draw (0,0)--(36*\i:.5);
          \node[vertex] at (36*\i:.5){};
          \node[vertex] at (36*\i:3){};
        }
        \node[vertex] at (0,0){};
        
        \foreach \i in {1,...,5}
          \draw (360/5*\i:1.5)--(360/5*\i:2);
        \foreach \i in {1,...,25}
        {
          \node[vertex] at (360/25*\i:1.5){};
          \node[vertex] at (360/25*\i:2){};
        }
        \node[inner sep=0,fill=white] at (-90:1){$M_1$};
        \node[inner sep=0,fill=white] at (-135:2.5){$M_2$};
    \end{tikzpicture}
    \caption{Construction of the graph $\MES(5)$. The cycle bounding the outer face in this drawing is the boundary, the cycles bounding the faces between $M_1$ and $M_2$ are the attachment cycles.}
    \label{fig:meshes}
\end{figure}

Using these graphs, we construct a graph $\UPG$ recursively as follows. In each iteration, we have a graph $\UPG(n)$ and a set $\mathcal C_n$ of pairwise disjoint facial cycles with respect to some embedding of $\UPG(n)$ such that each $C \in \mathcal C_n$ has length $2(n+1)$. 

We start the inductive construction by letting $G_1$ be a cycle of length $4$, and choosing $\mathcal C_1$ as the set consisting of this cycle. In each subsequent step, for each cycle $C \in \mathcal C_n$ we take a copy of $\MES(n+1)$ and identify its boundary with $C$ as indicated in Remark~\ref{rmk:glueplanargraphs}. Note that apart from the boundaries, all facial cycles in all copies of $\MES(N+1)$ are facial cycles of $\UPG(n+1)$; in particular, all attachment cycles are facial cycles of length $2(n+2)$. Let the set $\mathcal C_{n+1}$ consist of all attachment cycles of all copies of $\MES(n+1)$. Define $\UPG=\lim_{n \to \infty} \UPG(n)$. This graph is planar by Theorem \ref{thm:countableplanar} since every finite subgraph of $\UPG$ is a subgraph of some $\UPG(n)$ and planarity is preserved under taking subgraphs.

Note that the above construction is not unique (even when $\MES(n)$ is fixed for every $n$) since there are different, non-isomorphic ways of identifying the facial cycles. However, this non-uniqueness will not be an issue; we will show that any choice leads to a universal planar graph which with respect to the topological minor relation.

\begin{theorem}
\label{thm:upg}
Any connected, locally finite, planar graph has a $\UPG$-embedding.
\end{theorem}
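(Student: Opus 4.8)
The goal is to $\UPG$-embed an arbitrary connected, locally finite, planar graph $H$. I would first reduce to a convenient normal form. By Theorem \ref{thm:countableplanar}, $H$ has a plane embedding; since $H$ is locally finite and connected, I can fix such an embedding and build an exhaustion $H_1 \subseteq H_2 \subseteq \cdots$ of $H$ by finite connected subgraphs, each $H_n$ being (roughly) the ball of radius $n$ around a fixed root vertex. I would arrange that each $H_n$ inherits a plane embedding from that of $H$, and — the crucial normalization — that there is a designated face $F_n$ of $H_n$ (the one containing "everything not yet constructed"), with a facial cycle $D_n$ bounding it, such that all of $H \setminus H_n$ is embedded inside $F_n$ and attaches to $H_n$ only along $D_n$. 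Passing to a subdivision of $H$ if necessary (which does not affect being a topological minor), I can also assume every $D_n$ is an induced cycle, that $|V(D_n)|$ grows, and that $D_{n+1}$ and $D_n$ cobound an annular region of $H_{n+1}\setminus H_n$. This packaging is designed precisely to match the recursive structure of $\UPG$: the ever-growing nested attachment cycles $\mathcal C_n$ play the role of the $D_n$.

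The heart of the argument is then an induction that constructs $\UPG$-embeddings $\iota_n$ of $H_n$, agreeing with $\iota_{n-1}$ on $H_{n-1}$ (so Lemma \ref{lem:limitembedding} gives the limit embedding $\iota$ of $H$), together with a chosen attachment cycle $C_n \in \mathcal C_{m(n)}$ for a suitable level $m(n)$, such that $\iota_n$ maps $D_n$ into $C_n$ in cyclic order and maps all of $H_n$ into the "inside" of $C_n$ — the part of $\UPG$ built from $C_n$ inward, which is finite. The inductive step from $n$ to $n+1$ must embed the annular piece of $H_{n+1}$ lying between $D_n$ and $D_{n+1}$ into the part of $\UPG$ between $C_n$ and some descendant attachment cycle $C_{n+1}$ sitting many levels deeper. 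This is where the $\MES(n)$ gadgets do their work: the well-linked meshes let us route the $D_n$-vertices (sitting on the boundary of a copy of $\MES(m)$, hence on its boundary cycle) through the mesh to the inner perimeter in any prescribed order; the spokes carry them across to the outer perimeter; and the outer perimeter is chopped into attachment cycles, which become the boundaries of the next batch of $\MES$-copies. By composing enough such layers — using that the mesh parameters $2n$ vs.\ $n^2$ give us room, and that we may descend through arbitrarily many levels before committing to $C_{n+1}$ — we get enough width and enough freedom in the cyclic routing to embed any finite annular planar chunk with a prescribed cyclic boundary matching on both sides.

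The main obstacle, and the step I would spend the most care on, is exactly this annular embedding step: given the finite planar annular graph $A_n = H_{n+1} \setminus (H_n \setminus D_n)$ with outer boundary $D_n$ and inner boundary $D_{n+1}$, and given that $\iota_n$ has already pinned $D_n$ to the cyclic positions of $C_n$, one must produce a $\UPG$-embedding of $A_n$ inside the region of $\UPG$ descending from $C_n$, hitting $C_n$ only along $\iota_n(D_n)$ and terminating on some attachment cycle $C_{n+1}$ in the prescribed cyclic order. I would handle this by further subdividing $A_n$ so that it becomes a "grid-like" annulus whose radial structure mirrors a stack of $\MES$-layers, then route layer by layer: at each layer use the well-linkedness of the mesh (the defining property of an $m$--$n$-mesh) to realize the required partial permutation of boundary vertices, use the spokes to move outward, and use the splitting of the outer perimeter into attachment cycles to distribute the strands among the children. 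One must check that the number of strands never exceeds the available perimeter length (guaranteed by the $n \mapsto n^2$ blow-up in the mesh sizes, and by going deep enough), that cyclic order is respected at every transfer (this is why the meshes are well-linked via order-reversing maps, cf.\ Remark \ref{rmk:glueplanargraphs}), and that edges of $A_n$ interior to the annulus can be accommodated inside individual mesh copies. Once this geometric routing lemma is in hand, the induction runs smoothly and Lemma \ref{lem:limitembedding} finishes the proof.
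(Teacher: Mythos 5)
Your high-level machinery is the right one (route boundary vertices through the well-linked meshes one layer at a time, reorder them using the linkages, and finish with Lemma \ref{lem:limitembedding}), but the normalization you build the whole induction on is false, and this is a genuine gap rather than a technicality. You assume that $H$ admits an exhaustion by finite connected subgraphs $H_n$ such that all of $H\setminus H_n$ lies in a \emph{single} face $F_n$ of $H_n$, bounded by a facial \emph{cycle} $D_n$, with consecutive $D_n$'s cobounding annuli. Neither part of this survives contact with general connected locally finite planar graphs. First, face boundaries need not be cycles: for an infinite tree there are no cycles at all, and no amount of subdividing creates one, so $D_n$ does not exist. Second, and more fundamentally, the unbuilt part of the graph can be forced to occupy several faces of $H_n$ simultaneously: in the doubly infinite triangular prism $C_3\,\square\,\mathbb{Z}$ (a $3$-connected planar graph, embedded as nested triangles), the complement of any finite ball has two infinite components, one of which must lie inside the innermost triangle and the other outside the outermost one, i.e.\ in two distinct faces. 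Your single-annulus picture therefore only covers (roughly) one-ended graphs without cut vertices, and the reduction you invoke to reach that situation does not exist.

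The paper's proof is organised precisely around the point your proposal elides: instead of one distinguished face per stage, it assigns to \emph{every} face $F$ of the current finite piece its own attachment cycle $C_F$ of $\UPG(m)$, via an injective map from faces to the family $\mathcal C_m$, and the invariant maintained is only that the degree-one ``loose ends'' belonging to $F$ sit on $C_F$ in the correct cyclic order (loose ends appear exactly once in a facial sequence, so this is well defined even when the face boundary is a walk rather than a cycle). Growth then happens one vertex at a time: the new vertex is placed at the centre of the copy of $\MES(m+1)$ glued onto the cycle of the face containing it, its neighbours are routed to the inner perimeter and across the spokes, and the remaining loose ends of that face are distributed among the new attachment cycles, one per face of the enlarged graph. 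If you replace your global annular decomposition with this per-face bookkeeping, the routing arguments you sketch (well-linkedness, order reversal as in Remark \ref{rmk:glueplanargraphs}, counting perimeter capacity against the number of loose ends) do go through essentially as you describe.
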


\begin{proof}
Let $G$ be a connected, locally finite, planar graph, and let $H$ be the graph obtained from $G$ by subdividing every edge. Clearly, any $\UPG$-embedding of $H$ gives rise to an $\UPG$-embedding of $G$; the path corresponding to an edge $e \in E(G)$ is simply the union of the two paths corresponding to the edges obtained by subdividing $e$. In particular, it suffices to show that $H$ has a $\UPG$-embedding.

We partition the vertices of $H$ into \emph{original vertices}, that is, vertices corresponding to vertices of $H$, and \emph{subdivision vertices}, that is, vertices added to subdivide an edge. Any subdivision vertex has precisely two neighbours both of which are original, and any original vertex only has subdivision neighbours.

Let $(v_n)_{n \in \mathbb N}$ be an enumeration of the original vertices such that the subgraph of $G$ induced by the vertices corresponding to $v_1, \dots, v_n$ is connected for every $n \in \mathbb N$. Let $V_n = \{v_k \mid k \leq n\}$. Let $H_n$ be the subgraph of $H$ induced by $V_n$ and all neighbours of $V_n$. Let $H_n'$ be the be the subgraph of $H$ induced by $V_n$ and all subdivision vertices both of whose neighbours are in $V_n$. Subdivision vertices with exactly one neighbour in $V_n$ are called \emph{loose ends} of $H_n$, they have degree $1$ in $H_n$ and are the only vertices of $H_n$ which are not contained in $H_n'$. Note that all graphs $H_n$ and $H_n'$ are connected by our choice of the enumeration $(v_n)$.

Since $G$ is planar, so is $H$; for the remainder of the proof we fix an arbitrary plane embedding $\iota$ of $H$. Restricting this embedding to $H_n$ or $H_n'$ clearly gives plane embeddings for all $n \in \mathbb N$, by a slight abuse of notation we denote these embeddings by $\iota$ as well. When referring to faces of $H_n$ or faces of $H_n'$ we tacitly assume that these are faces with respect to the embedding $\iota$.  We say that a loose end $v$ of $H_n$ belongs to a face $F$ of $H_n$ if $\iota(v)$ is contained in the closure of $F$. Note that any loose end belongs to exactly one face since it has degree $1$ and thus it has a neighbourhood whose intersection with $\mathbb R^2 \setminus \iota(H_n)$ is connected. Further note that any loose end appears precisely once in the boundary sequence of the face it belongs to. Denote by $L(F)$ the set of loose ends belonging to the face $F$.

We now inductively construct $\UPG(m)$-embeddings of $H_n$ for appropriate choices of $m$ and $n$. Call a $\UPG(m)$-embedding $\phi$ of $H_n$ \emph{good} if there is an injective map assigning to each face $F$ of $H_n$ a cycle $C_F \in \mathcal C_m$ such that the restriction of $\phi$ to $L(F)$ is an order preserving injection from $L(F)$ to $C_F$ (with respect to the cyclic orders given by the boundary sequence of $F$ and $C_F$, respectively).

Our inductive construction rests on the following two claims whose proofs are fairly straightforward. Let $\phi$ be a good $\UPG(m)$-embedding of $H_n$ and assume that $H_n$ has at most $m$ loose ends. Note that applying the two claims inductively gives a sequence of embeddings of $H_n'$ into $\UPG$ satisfying the conditions of Lemma \ref{lem:limitembedding}, thereby finishing the proof of Theorem \ref{thm:uni-locfin}
\begin{clm}
\label{clm:extend1}
There is a good $\UPG(m+1)$-embedding $\psi$ of $H_n$ such that $\phi$ and $\psi$ agree on $H_n'$
\end{clm}
\begin{clm}
\label{clm:extend2}
If $H_{n+1}$ has at most $m$ loose ends, then there is a good $\UPG(m+1)$-embedding $\psi$ of $H_{n+1}$ such that $\phi$ and $\psi$ agree on $H_n'$.
\end{clm}

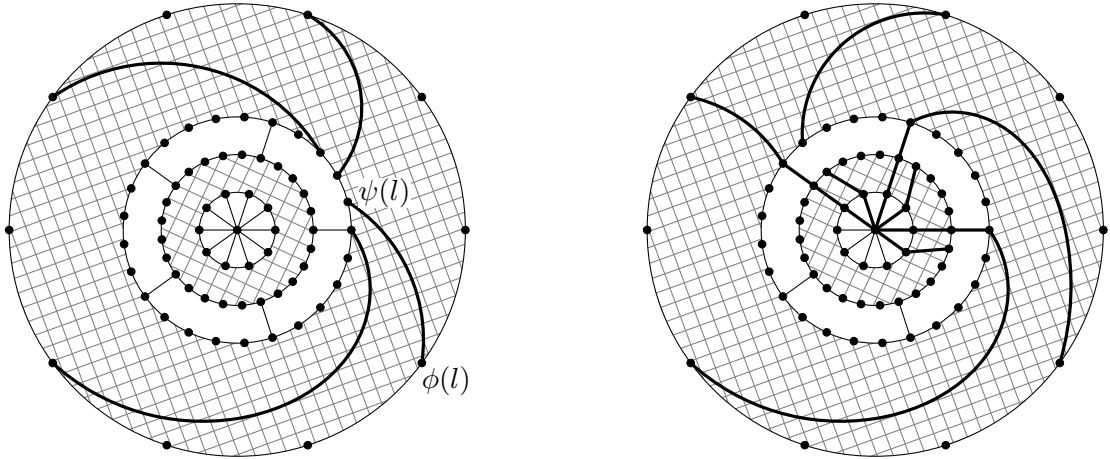
\begin{figure}
    \centering
    \begin{tikzpicture}[use Hobby shortcut,
        vertex/.style={inner sep=1pt,circle,draw,fill}]
      \begin{scope}
      \path[clip] (0,0) circle (3);
      \begin{scope}[scale=.2,rotate=20]
        \foreach \i in {-20,...,20}
        {
          \draw[color=gray] (-20,\i)--(20,\i);
          \draw[color=gray] (\i,-20)--(\i,20);
        }
       \end{scope}
      \path[fill=white] (0,0) circle (1.5);
       \end{scope}
       
      \begin{scope}
      \path[clip] (0,0) circle (1);
      \begin{scope}[scale=.2,rotate=65]
        \foreach \i in {-20,...,20}
        {
          \draw[color=gray] (-20,\i)--(20,\i);
          \draw[color=gray] (\i,-20)--(\i,20);
        }
      \end{scope}
      \path[fill=white] (0,0) circle (.5);
       \end{scope}
        [vertex/.style={inner sep=1pt,circle,draw,fill}]
        \draw (0,0) circle (.5);
        \draw (0,0) circle (1);
        \draw (0,0) circle (1.5);
        \draw (0,0) circle (3);
        \foreach \i in {1,...,10}
        {
          \draw (0,0)--(36*\i:.5);
          \node[vertex] at (36*\i:.5){};
          \node[vertex] at (36*\i:3){};
        }
        \node[vertex] at (0,0){};
        
        \foreach \i in {1,...,5}
          \draw (360/5*\i:1)--(360/5*\i:1.5);
        \foreach \i in {1,...,25}
        {
          \node[vertex] at (360/25*\i:1){};
          \node[vertex] at (360/25*\i:1.5){};
        }
        \draw[very thick] (36*4:3)..(90:2)..(360/25*3:1.5);
        \draw[very thick] (36*2:3)..(50:2.5)..(360/25*2:1.5);
        \draw[very thick] (-36:3)..(-10:2.2)..(360/25*1:1.5);
        \draw[very thick] (-36*4:3)..(-90:2.5)..(-30:2)..(0:1.5);
        
        \node at (-36:3.4){$\phi(l)$};
        \node[inner sep=0,fill=white] at (360/25:2){$\psi(l)$};
    \end{tikzpicture}%
    \hspace{2cm}
    \begin{tikzpicture}[use Hobby shortcut,
        vertex/.style={inner sep=1pt,circle,draw,fill}]
      \begin{scope}
      \path[clip] (0,0) circle (3);
      \begin{scope}[scale=.2,rotate=20]
        \foreach \i in {-20,...,20}
        {
          \draw[color=gray] (-20,\i)--(20,\i);
          \draw[color=gray] (\i,-20)--(\i,20);
        }
       \end{scope}
      \path[fill=white] (0,0) circle (1.5);
       \end{scope}
       
      \begin{scope}
      \path[clip] (0,0) circle (1);
      \begin{scope}[scale=.2,rotate=65]
        \foreach \i in {-20,...,20}
        {
          \draw[color=gray] (-20,\i)--(20,\i);
          \draw[color=gray] (\i,-20)--(\i,20);
        }
      \end{scope}
      \path[fill=white] (0,0) circle (.5);
       \end{scope}
        [vertex/.style={inner sep=1pt,circle,draw,fill}]
        \draw (0,0) circle (.5);
        \draw (0,0) circle (1);
        \draw (0,0) circle (1.5);
        \draw (0,0) circle (3);
        \foreach \i in {1,...,10}
        {
          \draw (0,0)--(36*\i:.5);
          \node[vertex] at (36*\i:.5){};
          \node[vertex] at (36*\i:3){};
        }
        \node[vertex] at (0,0){};
        
        \foreach \i in {1,...,5}
          \draw (360/5*\i:1)--(360/5*\i:1.5);
        \foreach \i in {1,...,25}
        {
          \node[vertex] at (360/25*\i:1){};
          \node[vertex] at (360/25*\i:1.5){};
        }
        \draw[very thick] (36*4:3)..(140:2)..(360/25*10:1.5)--(0,0);
        \draw[very thick] (36*2:3)..(100:2.5)..(360/25*9:1.5);
        \draw[very thick] (-36:3)..(0:2.5)..(50:2)..(360/25*5:1.5)--(0,0);
        \draw[very thick] (-36*4:3)..(-90:2.5)..(-30:2)..(0:1.5)--(0,0);
        
        \draw[very thick] (360/25*9:1)--(360/10*3:.5)--(0,0);
        \draw[very thick] (360/25*4:1)--(360/10*1:.5)--(0,0);
        \draw[very thick] (-360/25:1)--(-360/10:.5)--(0,0);
    \end{tikzpicture}
    \caption{Sketch of the constructions used to prove Claims \ref{clm:extend1} and \ref{clm:extend2}.}
    \label{fig:extendlooseends}
\end{figure}

It remains to prove the two claims. In both claims, for any vertex or edge $x$ of $H_n'$, we must have $\psi(x) = \phi(x)$. Thus the maps $\phi$ and $\psi$ in the above claims only differ in the embeddings of the loose ends, their incident edges and potentially the additional vertex $v_{n+1}$ and its incident edges. We refer to Figure \ref{fig:extendlooseends} for a sketch of how the embeddings of loose ends and their incident edges are extended into the copies of $\MES(m+1)$ that were added in the construction of $\UPG(m+1)$ from $\UPG(m)$.

For a formal proof, consider the following setup. Let $F$ be an arbitrary face of $H_n$. As before, denote by $L(F)$ the loose ends belonging to $F$, and for $l \in L(F)$ let $e_l$ be the unique edge incident to $l$. Let $M_F$ be the copy of $\MES(m+1)$ whose boundary was identified with $C_F$ in the construction of $\UPG(m+1)$. Consider $\phi(L(F))$ as vertices on the boundary of $M_F$, ignoring the embedding of the rest of $H_n$. Recall that the boundary was identified with $C_F$ using an order reversing bijection, so the restriction of $\phi$ to $L(F)$ is an order reversing injection from $L(F)$ to the boundary of $M_F$.

For the proof of Claim \ref{clm:extend1} we apply the following construction for each face $F$ of $H_n$, see the left half of Figure \ref{fig:extendlooseends}. Pick an arbitrary attachment cycle $C_F'$ of $M_F$. Let $Y$ be the set of vertices on $C_F'$ which are contained in the outer mesh of $M_F$; note that $|Y| = m+2 \geq |L(F)|$. Let $\xi\colon \phi(L(F)) \to Y$ be an order reversing injection. Since $M_F$ is a mesh, we can find a $\xi$-linkage whose paths intersect the boundary of $M_F$ only in their endpoints.

Let $\psi(e_l)$ be the concatenation of $\phi(e_l)$ and  the $\phi(l)$--$\xi(\phi(l))$-path in this linkage, let $\psi(l) = \xi(\phi(l))$, and let $\psi(x) = \phi(x)$ for every vertex or edge $x$ of $H_n'$. It is easy to check that $\psi$ is a $\UPG(m+1)$-embedding of $H_n$. By construction, the images of all loose ends belonging to $F$ lie on $C'_F$. Moreover, the cyclic orders of $\phi(v_i)$ on $C_F$ and $\psi(x_i)$ on $C'_F$ coincide since the composition of two order reversing maps is an order preserving map. For any two faces $F_1$ and $F_2$ we have $C_{F_1}\neq C_{F_2}$ and thus $C_{F_1}'$ and $C_{F_2}'$ are attachment cycles of different copies of $\MES(m+1)$, so the function mapping $F$ to $C'_F$ is an injection.

Thus $\psi$ is a good $\UPG(m+1)$-embedding of $H_n$ which coincides with $\phi$ on $H_n'$. This proves Claim \ref{clm:extend1}.

For the proof of Claim \ref{clm:extend2}, we first note that any face $F$ of $H_{n+1}$ which is not incident to $v_{n+1}$ is also a face of $H_n$. We can thus apply the same construction as above to $F$ to obtain an attachment cycle of $M_F$ in $\UPG(m+1)$ and an appropriate embedding of the loose ends belonging to $F$ and their incident edges.

It remains to provide a construction for faces $F$ incident to $v_{n+1}$.
A sketch of this construction is shown in the right half of Figure \ref{fig:extendlooseends}.

Let $l_1,\dots,l_k$ be a cyclically ordered enumeration of the loose ends $L(F_0)$. Let $i_1, \dots, i_r$ be the indices such that $l_{i_j}$ is incident to $v_{n+1}$, for convenience set $i_{r+1} = i_1$. Let $B(F_0)$ be the boundary sequence of $F_0$ and let $B_j(F_0)$ be the part of $B(F_0)$ strictly between $l_{i_j}$ and $l_{i_{j+1}}$; if $v_{n+1}$ is incident to a unique $l \in L(F_0)$, then $B_1(F_0)$ is the whole boundary sequence without $l$, cyclically permuted so it starts with the successor of $l$. Clearly, $B_j(F_0)$ contains at most $|L(F_0)|-1$ vertices, all of which are loose ends belonging to the same face of $H_{n+1}$. For $j \neq j'$, the loose ends in $B_j(F_0)$ and $B_{j'}(F_0)$ belong to different faces of $H_{n+1}$.

Let $Y$ be the set of vertices in the outer perimeter of $M_{F_0}$ in clockwise cyclic order (that is, we consider the outer perimeter as a face of the outer mesh). Pick an order reversing injection $\xi \colon \phi(L(F_0)) \to Y$ such that every $\phi(l_{i_j})$ is incident to a spoke, and the image of each $B_j(F_0)$ is completely contained in an attachment cycle $C_j$. This is possible, because $M_{F_0}$ has at least $m \geq |L(F_0)|$ spokes, and between any two spokes we can find $m-1 \geq  |L(F_0)|-1$ vertices belonging to the same attachment cycle.

Next, let $N$ be the set of neighbours of $v_{n+1}$. Going around $\iota(v_{n+1})$ in clockwise direction defines a cyclic order on $N$. The restriction of this cyclic order to the vertices $l_{i_j}$ agrees with the restriction of the boundary sequence of $F_0$ to these vertices, otherwise the embedding would not be planar. Let $Z$ be the set of vertices on the inner perimeter of $M_{F_0}$ in clockwise cyclic order. Let $\eta \colon N \to Z$ be an order preserving map such that $\eta(l_{i_j})$ is incident to $\xi(\phi(l_{i_j}))$ and the vertices between $l_{i_j}$ and $l_{i_{j+1}}$ are all mapped to the attachment cycle $C_j$ defined above.

Now define the embedding $\psi$ as follows. Let $\psi(x_{n+1})$ be the centre $z$ of $M_{F_0}$. For $x \in N$ let $\psi(x) = \eta(x)$. Disjoint $z$--$\eta(x)$-paths for the images $\psi(v_{n+1}x)$ can be constructed from a linkage between $\eta(N)$ and the neighbours of $z$. Such a linkage exists in the inner mesh of $M_{F_0}$ because $v_{n+1}$ has $N < |L(F_0)| + |\{\text{loose ends of }H_{n+1}\}| \leq 2m$ neighbours, so there is an injection from $\eta(N)$ to the cycle consisting of the neighbours of $z$. 
Next fix a $\xi$-linkage in the outer mesh of $M_{F_0}$ whose paths intersect the boundary of $M_{F_0}$ only in their endpoints.
For $l \in L(F_0) \setminus N$ we set $\psi(l) = \xi(\phi(l))$ and let $\psi(e_l)$ be the concatenation of $\phi(e_l)$ with the $\phi(l)$--$\xi(\phi(l))$-path in this linkage.
For $l \in N$, let $\psi(e_l)$ be the concatenation of $\phi(e_l)$ with the $\phi(l)$--$\xi(\phi(l))$-path in this linkage and the incident spoke of $M_{F_0}$; note that $\psi(l) = \eta(l)$ is the other endpoint of this spoke.
Finally, let $\psi(x) = \phi(x)$ for every vertex or edge $x$ of $H_n'$. 

This clearly gives a $\UPG(m+1)$-embedding of $H_{n+1}$. By definition, $\phi$ and $\psi$ coincide on $H_n'$. To see that $\psi$ is a good embedding, note that the boundary sequence of each face $F$ of $H_{n+1}$ incident to $v_{n+1}$ has the form
\[l_{i_j},B_j(F_0),l_{i_{j+1}},v_{n+1},l_1',v_{n+1},l_2',v_{n+1}\dots v_{n+1}l_s'\]
for some $s \geq 0$, where $l_1', \dots, l_s'$ is the reversal of (possibly empty) sequence of neighbours of $v_{n+1}$ appearing between $l_{i_j}$ and $l_{i_{j+1}}$ in the cyclic order. The order of the loose ends in this sequence coincides with the cyclic order of their embeddings on the cycle $C_j$.
\end{proof}

Theorem \ref{thm:uni-locfin} is now an easy consequence of the above result and Theorem \ref{thm:uncountableplanar}.

\begin{proof}[Proof of Theorem \ref{thm:uni-locfin}]
Let $G$ be the disjoint union of the following graphs:
\begin{enumerate}
    \item countably many copies of $\UPG$,
    \item continuum many copies of the cycle $C_k$ for every $k \in \mathbb N$,
    \item continuum many double rays.
\end{enumerate}
This graph is planar by Theorem \ref{thm:uncountableplanar}, and it is locally finite since all of the constituent graphs are locally finite.

If $H$ is a locally finite planar graph, then by Theorem \ref{thm:uncountableplanar} there are at most countably many components of $H$ containing a vertex of degree $3$ or more. Each such component can be embedded into a different copy of $\UPG$ in $G$. There are at most continuum many other components all of which are either cycles or (possibly infinite) paths. Each of these components can be embedded into a different copy of some cycle $C_k$ or double ray.
\end{proof}

\begin{remark}
Theorem \ref{thm:upg} shows that the class of connected locally finite planar graphs also contains a universal element with respect to the topological minor relation. The above proof of Theorem \ref{thm:countableplanar} can be easily adapted to yield the same conclusion for the class of countable, locally finite planar graphs.
\end{remark}

\begin{remark}
The construction of $\MES$ can be modified to give a graph with maximum degree $d$ for any $d \geq 3$. The graphs $\MES(n)$ can be built in a way that every vertex except the centre has degree at most $3$ (replace vertices of higher degree by appropriate cycles) and the centre has degree $d$ (do not connect it to all vertices on the cycle of length $2n$). Using this modified construction, it is straightforward to check that the above proof shows that for every $d \in \mathbb N$, the class of planar graphs with maximum degree at most $d$ contains a universal graph with respect to the topological minor relation (the cases $d \leq 2$ are trivial), and the same is true for the class of connected planar graphs with maximum degree at most $d$.
\end{remark}

\bibliographystyle{abbrv}
\bibliography{bibliography.bib}

\end{document}